\newtheorem{thm}{Theorem}[section]
\newtheorem{lem}[thm]{Lemma}
\newtheorem{prop}[thm]{Proposition}
\newtheorem{defn}[thm]{Definition}
\newtheorem{remark}[thm]{Remark}
\newcommand{\R}{\mathbb{R}}
\newcommand{\PP}{\mathbb{P}}
\newcommand{\dd}{\mathrm{d}}
\begin{document}

\title{A mixed fractional CIR model: positivity and an implicit Euler scheme}
\author{
Chunhao Cai\thanks{School of Mathematics (Zhuhai), Sun Yat-sen University, Guangzhou, China.
Email: caichh9@mail.sysu.edu.cn}
\and
Cong Zhang\thanks{School of Management Science, Zhejiang University, Hangzhou, China.
Email: zcong@zju.edu.cn}
}
\date{\today}

\maketitle

\begin{abstract}
We consider a Cox--Ingersoll--Ross (CIR) type short rate model driven by a mixed fractional Brownian motion. Let
$M=B+B^H$ be a one-dimensional mixed fractional Brownian motion with Hurst index $H>1/2$, and let
$\mathbf{M}=(M,\mathbb{M}^{\mathrm{It\hat{o}}})$ denote its canonical It\^o rough path lift. We study the rough
differential equation
\begin{equation}\label{eqn1}
\dd r_t = k(\theta-r_t)\,\dd t + \sigma\sqrt{r_t}\,\dd\mathbf{M}_t,\qquad r_0>0,
\end{equation}
and show that, under the Feller condition $2k\theta>\sigma^2$, the unique rough path solution exists and is almost surely
strictly positive for all times. Our approach relies on an It\^o type formula for rough paths, together with refined
pathwise estimates for the mixed fractional Brownian motion, including L\'evy's modulus of continuity for the Brownian
part and a law of the iterated logarithm for the fractional component. As a consequence, the positivity property of the
classical CIR model extends to this non-Markovian rough path setting. We also establish the convergence, in the uniform
norm, of an implicit Euler scheme for the associated singular equation obtained by a square-root transformation.
\end{abstract}

\section{Introduction}
Short-rate models of Cox--Ingersoll--Ross (CIR) type play a central role in the modeling of interest rate dynamics in
mathematical finance, as they combine mean-reversion with a strictly positive state space; see, for instance, the
seminal work of Cox, Ingersoll and Ross~\cite{CIR85}. In its classical form, the CIR model is driven by a Brownian
motion and hence inherits the Markov and semimartingale properties of the driving noise. Empirical evidence for
long-range dependence, roughness and memory effects in financial time series has motivated various extensions of the
CIR dynamics in which the Brownian motion is replaced or complemented by fractional or mixed fractional Brownian
motions~\cite{cheridito2001,hu2008}. The purpose of this paper is to analyze a CIR-type short rate model driven by a
mixed fractional Brownian motion within the rough path framework, focusing on pathwise positivity of the short rate and on the convergence of a natural implicit Euler approximation scheme. The rough-path approach used here follows the foundations of Lyons’ theory and its extensions; see Lyons~\cite{lyons1998} and Friz \& Victoir~\cite{friz-victoir}, and for results on differential equations driven by fractional Brownian motion see Nualart \& R\u{a}\c{s}canu~\cite{nualart-rascanu}.

We consider the one-dimensional rough differential equation
\begin{equation}\label{eq:intro-mfcir}
\dd r_t  = k(\theta- r_t)\,\dd t + \sigma \sqrt{r_t}\,\dd\mathbf{M}_t,\qquad r_0>0,
\end{equation}
where $k,\theta,\sigma>0$, and $\mathbf{M}=(M,\mathbb{M}^{\mathrm{It\hat{o}}})$ is the It\^o rough path lift of the
mixed fractional Brownian motion $M=B+B^H$. Here $B$ is a standard Brownian motion, $B^H$ is an independent fractional
Brownian motion with Hurst parameter $H>1/2$, and
\[
\mathbb{M}^{\mathrm{It\hat{o}}}_{s,t}=\int_s^t M_{s,r}\otimes \dd M_r
\]
is the It\^o iterated integral. The process $\mathbf{M}$ is a canonical example of a Gaussian rough path.

For the classical CIR model driven by a Brownian motion $W$,
\[
\dd r_t = k(\theta-r_t)\,\dd t + \sigma\sqrt{r_t}\,\dd W_t,\qquad r_0>0,
\]
it is well known that the solution remains nonnegative, and even strictly positive, under the Feller condition
$2k\theta>\sigma^2$ (see e.g.\ Feller's classification of boundaries and the monograph of Karatzas and Shreve).
Our goal is to show that a similar positivity property holds for the mixed fractional model \eqref{eqn1}, in a rough path
framework.

The main steps of the analysis are as follows. First, we recall the notion of bracket for a rough path and prove an
It\^o type formula for the mixed fractional rough path $\mathbf{M}$. This allows us to apply the square-root
transformation $z_t=\frac{2}{\sigma}\sqrt{r_t}$ up to the (hypothetical) hitting time of zero, and we obtain a singular
mean-reverting equation for $z$. Second, we use pathwise estimates for $M=B+B^H$, namely L\'evy's modulus for $B$ and a
law of the iterated logarithm for $B^H$, to show that the singular drift in the $z$-equation prevents $z$ from hitting
zero in finite time. This implies that $r_t>0$ almost surely for all $t\ge0$. Finally, we study an implicit Euler scheme
for the singular equation and prove its convergence in the uniform norm, following ideas of Marie~\cite{marie2015}.

\section{An It\^o formula for the mixed fractional rough path}

The first ingredient in our analysis is an It\^o type formula tailored to the mixed fractional rough path
$\mathbf{M}=(M,\mathbb{M}^{\mathrm{It\hat{o}}})$. To this end, we recall the notion of the bracket of a rough path and
establish a general identity that expresses the bracket in terms of the first and second levels of the rough path. We
then specialize this identity to the case of the mixed fractional Brownian motion $M=B+B^H$ and compute the bracket of
its canonical It\^o lift. This provides the key tool needed to perform a square-root transformation of the mixed
fractional CIR equation in a purely pathwise fashion.

In this section, we consider rough paths $\mathbf{X}=(X,\mathbb{X}) \in \mathscr{C}^{\alpha}$ with
$\alpha \in (\frac{1}{3}, \frac{1}{2}]$ unless otherwise noted. Let $V$ be a Banach space. For $x \in V \otimes V$, we
introduce the symmetric part of $x$:
\[
\operatorname{Sym}(x) = \frac{1}{2}\left(x + x^T\right). 
\]
Then the bracket of the rough path will be defined as: 
\begin{defn}
Let $\mathbf{X}=(X,\mathbb{X}) \in \mathscr{C}^{\alpha}$ be a rough path. The bracket of $\mathbf{X}$ is the path
$[\mathbf{X}]: [0,T] \to \R^{d \times d}$ given by
\begin{equation}
[\mathbf{X}]_t:=(X_{0,t}\otimes X_{0,t}) - 2\operatorname{Sym}(\mathbb{X}_{0,t}).
\end{equation}
\end{defn}

\begin{remark}
\label{rk1}
By Chen's relation for $\mathbb{X}$, one checks that, for all $(s,t) \in \Delta_{[0, T]}$,
\begin{equation}
[\mathbf{X}]_{s,t}=(X_{s,t}\otimes X_{s,t}) - 2\operatorname{Sym}(\mathbb{X}_{s,t}). 
\end{equation}    
\end{remark}

We now specialize to the mixed fractional Brownian motion.

\begin{lem}\label{lem:bracket-m}
Let $M = B + B^H$ be a one-dimensional mixed fractional Brownian motion, given as the sum of a standard Brownian motion
$B$ and an independent fractional Brownian motion $B^H$ with Hurst index $H > \frac{1}{2}$. Consider
$\mathbf{M}=(M,\mathbb{M}^{\mathrm{It\hat{o}}})$ where
\begin{equation}
\mathbb{M}^{\mathrm{It\hat{o}}}_{s,t}:=\int_{s}^{t}M_{s,r}\otimes\,\dd M_r.    
\end{equation}
Then for any finite $t > 0$,
\begin{equation}
[\mathbf{M}]_{t} = t.
\end{equation}    
\end{lem}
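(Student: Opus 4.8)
The plan is to identify the bracket of the It\^o lift with the quadratic variation of $M$ and then to compute that quadratic variation from the decomposition $M=B+B^H$. Since $d=1$, we have $V\otimes V\cong\R$, the transpose acts as the identity, and so does $\operatorname{Sym}$; moreover $M_0=B_0+B^H_0=0$, so $M_{0,t}=M_t$. By Remark~\ref{rk1}, for every $(s,t)\in\Delta_{[0,T]}$,
\[
[\mathbf{M}]_{s,t}=M_{s,t}^2-2\,\mathbb{M}^{\mathrm{It\hat{o}}}_{s,t}=M_{s,t}^2-2\int_s^t M_{s,r}\,\dd M_r .
\]
For any partition $s=u_0<u_1<\cdots<u_N=t$ one has the telescoping identity
\[
M_{s,t}^2=2\sum_i M_{s,u_i}M_{u_i,u_{i+1}}+\sum_i M_{u_i,u_{i+1}}^2 ;
\]
letting the mesh tend to zero, the first sum converges to $\int_s^t M_{s,r}\,\dd M_r$ (these are exactly the left-point Riemann sums defining the It\^o iterated integral $\mathbb{M}^{\mathrm{It\hat{o}}}_{s,t}$), while $\sum_i M_{u_i,u_{i+1}}^2$ converges to the quadratic variation $[M]_{s,t}$ of $M$. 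Hence $2\int_s^t M_{s,r}\,\dd M_r=M_{s,t}^2-[M]_{s,t}$, so that $[\mathbf{M}]_{s,t}=[M]_{s,t}$, and it only remains to prove $[M]_t=t$.

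For the quadratic variation of $M=B+B^H$ I would use bilinearity: $[M]=[B]+2[B,B^H]+[B^H]$. Here $[B]_t=t$ is classical. Since $H>\tfrac12$, $B^H$ is almost surely $\beta$-H\"older continuous for every $\beta<H$, in particular for some $\beta>\tfrac12$, so its $2$-variation — and hence $[B^H]_t$ — vanishes. Finally $[B,B^H]_t=0$: by Cauchy--Schwarz applied to the defining Riemann sums, $\bigl|\sum_i B_{u_i,u_{i+1}}B^H_{u_i,u_{i+1}}\bigr|^2\le\bigl(\sum_i B_{u_i,u_{i+1}}^2\bigr)\bigl(\sum_i (B^H_{u_i,u_{i+1}})^2\bigr)$, whose right-hand side tends to $0$ because $[B^H]_t=0$. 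Therefore $[M]_t=t$, whence $[\mathbf{M}]_t=[\mathbf{M}]_{0,t}=[M]_{0,t}=[M]_t=t$.

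The step that requires the most care — and what I regard as the main obstacle — is the passage to the limit in the first paragraph: one must verify that along a common sequence of partitions with vanishing mesh the sums $\sum_i M_{s,u_i}M_{u_i,u_{i+1}}$ and $\sum_i M_{u_i,u_{i+1}}^2$ converge (in probability, say) to $\int_s^t M_{s,r}\,\dd M_r$ and to $[M]_{s,t}$, so that the algebraic identity passes to the limit. This is where the structure $M=B+B^H$ is used: the Brownian contributions converge as in the construction of the It\^o integral, while the fractional and mixed contributions — an integrand of H\"older order $<\tfrac12$ against an integrator of H\"older order $>\tfrac12$, so that the two exponents sum to more than $1$ — converge pathwise as Young integrals, and the same dichotomy controls $\sum_i M_{u_i,u_{i+1}}^2$. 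Equivalently, one may expand $\int_0^t M_r\,\dd M_r$ by bilinearity into the It\^o self-integral of $B$ (equal to $\tfrac12(B_t^2-t)$), the Young self-integral of $B^H$ (equal to $\tfrac12(B^H_t)^2$ by the Young chain rule) and two cross terms (summing to $B_tB^H_t$ by Young integration by parts), and add them to obtain $\tfrac12(M_t^2-t)$; the conclusion is the same. Everything else is routine algebra.
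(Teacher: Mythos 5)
Your proof is correct, and it reaches the conclusion by a cleaner conceptual route than the paper's. The paper works directly with the bracket increments $[\mathbf{M}]_{t_i,t_{i+1}}=M_{t_i,t_{i+1}}^2-2\mathbb{M}_{t_i,t_{i+1}}$: it keeps the pure Brownian block $B_{t_i,t_{i+1}}^2-2\mathbb{B}_{t_i,t_{i+1}}$ intact (which sums exactly to $t$ by It\^o's formula) and shows that every remaining term --- the cross products, the fractional square, and the Young integrals $\int B^H\,\dd B$, $\int B\,\dd B^H$, $\int B^H\,\dd B^H$ --- is $O(|t_{i+1}-t_i|^{H+\frac12-2\varepsilon})$ with exponent exceeding $1$, hence negligible after summation. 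You instead first prove the structural identity $[\mathbf{M}]_{s,t}=[M]_{s,t}$ (bracket of the It\^o lift equals quadratic variation of the path) via the telescoping identity, and then compute $[M]_t=[B]_t+2[B,B^H]_t+[B^H]_t=t$ by bilinearity, disposing of $[B^H]$ by H\"older regularity and of the cross variation by Cauchy--Schwarz. The analytic inputs are the same in both arguments --- It\^o theory for the $B$--$B$ interaction and Young integration wherever the H\"older exponents sum to more than $1$, which is exactly where $H>\tfrac12$ enters --- and you correctly flag that the convergence of the left-point Riemann sums to $\mathbb{M}^{\mathrm{It\hat{o}}}_{s,t}$ is the step that needs this dichotomy. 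Your closing remark is in fact the shortest complete proof: since the bracket is defined pointwise as $M_t^2-2\mathbb{M}^{\mathrm{It\hat{o}}}_{0,t}$, one can simply evaluate $\mathbb{M}^{\mathrm{It\hat{o}}}_{0,t}=\tfrac12(M_t^2-t)$ by expanding into the It\^o self-integral of $B$, the Young self-integral of $B^H$, and the Young cross terms, with no partition argument for the bracket at all. What the paper's increment-by-increment estimate buys in exchange is a uniform rate ($o(1)$ controlled by $|\pi|^{H-\frac12-2\varepsilon}$) that makes the additive structure of $[\mathbf{M}]$ over subintervals explicit; what your route buys is the transparent identification of $[\mathbf{M}]$ with the classical quadratic variation, which explains \emph{why} only the Brownian component contributes.
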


\begin{proof}
In the one-dimensional case we have $\operatorname{Sym}(\mathbb{M}_{s,t}) = \mathbb{M}_{s,t}$.

For brevity, write $\mathbb{M}$ for $\mathbb{M}^{\mathrm{It\hat{o}}}$. Fix $t>0$, and let
$\pi: 0 = t_0 < t_1 < \dots < t_n = t$ be a partition of $[0,t]$ with mesh size
$|\pi| := \max_{i}\left| t_{i+1} - t_{i}\right|$. By Remark~\ref{rk1},
\[
[\mathbf{M}]_t = \lim_{|\pi| \to 0}\sum_{i = 0}^{n-1} [\mathbf{M}]_{t_i, t_{i+1}},
\]
where
\[
[\mathbf{M}]_{t_i, t_{i+1}} = (M_{t_i, t_{i+1}})^2 - 2\mathbb{M}_{t_i, t_{i+1}}.
\]
Since $M = B + B^H$, we obtain
\begin{equation}\label{eq:key3}
\begin{aligned}
[\mathbf{M}]_{t_i, t_{i+1}}
&= B_{t_i, t_{i+1}}^2 + 2 B_{t_i, t_{i+1}} B^H_{t_i, t_{i+1}} + (B^H_{t_i, t_{i+1}})^2 \\
&\quad - 2\mathbb{B}_{t_i, t_{i+1}} - 2\mathbb{B}^H_{t_i, t_{i+1}} - 2\int_{t_i}^{t_{i+1}}B^H_{t_i, r}\,\dd B_r
       - 2\int_{t_i}^{t_{i+1}}B_{t_i, r}\,\dd B^H_r,
\end{aligned}
\end{equation}
where
\[
\mathbb{B}_{s,t}:=\int_s^t B_{s,r}\,\dd B_r,\qquad \mathbb{B}^H_{s,t}:=\int_s^t B^H_{s,r}\,\dd B^H_r.
\]
We group the non-Brownian terms as
\[
\mathrm{I}_i := 2 B_{t_i, t_{i+1}} B^H_{t_i, t_{i+1}} + (B^H_{t_i, t_{i+1}})^2,
\]
and
\[
\mathrm{II}_i := \mathbb{B}^H_{t_i, t_{i+1}}+\int_{t_i}^{t_{i+1}}B^H_{t_i, r}\,\dd B_r
+\int_{t_i}^{t_{i+1}}B_{t_i, r}\,\dd B^H_r.
\]
Then \eqref{eq:key3} can be rewritten as
\[
[\mathbf{M}]_{t_i, t_{i+1}} = B_{t_i, t_{i+1}}^2 - 2\mathbb{B}_{t_i, t_{i+1}}
+ \mathrm{I}_i - 2\mathrm{II}_i.
\]

Since $B$ and $B^H$ are almost surely H\"older continuous of any order strictly less than $\frac{1}{2}$ and $H$,
respectively, there exists $\varepsilon \in (0,\tfrac{H}{2}-\tfrac{1}{4})$ such that $B$ is
$(\tfrac12-\varepsilon)$-H\"older and $B^H$ is $(H-\varepsilon)$-H\"older. Hence
\[
|B_{t_i, t_{i+1}}|\lesssim |t_{i+1}-t_i|^{\frac12-\varepsilon},\qquad
|B^H_{t_i, t_{i+1}}|\lesssim |t_{i+1}-t_i|^{H-\varepsilon},
\]
and therefore
\begin{equation}\label{eqn1rp}
\mathrm{I}_i = O\big((t_{i+1}-t_i)^{H+\frac12-2\varepsilon}\big).
\end{equation}

For the integrals in $\mathrm{II}_i$ we use Young integration: since
$(\tfrac12-\varepsilon)+(H-\varepsilon)>1$, the Young integrals $\int B^H\,\dd B$ and $\int B\,\dd B^H$ exist and satisfy
\[
\left|\int_{t_i}^{t_{i+1}}B^H_{t_i,r}\,\dd B_r\right|
\lesssim \|B^H\|_{H-\varepsilon,[t_i,t_{i+1}]}\,\|B\|_{\frac12-\varepsilon,[t_i,t_{i+1}]}\,
|t_{i+1}-t_i|^{H+\frac12-2\varepsilon},
\]
and similarly for $\int B_{t_i,r}\,\dd B^H_r$. Moreover,
\[
\mathbb{B}^H_{t_i, t_{i+1}} = O\big((t_{i+1}-t_i)^{2H-2\varepsilon}\big),
\]
with $2H-2\varepsilon>H+\tfrac12-2\varepsilon$. Altogether we obtain
\begin{equation}\label{eqn2rp}
\mathrm{II}_i = O\big((t_{i+1}-t_i)^{H+\frac12-2\varepsilon}\big).
\end{equation}

Combining \eqref{eqn1rp} and \eqref{eqn2rp}, we arrive at
\begin{equation}\label{eq:key1}
\sum_{i=0}^{n-1}\left( \mathrm{I}_i - 2\mathrm{II}_i\right)
= O\Big(\sum_{i=0}^{n-1}(t_{i+1}-t_i)^{H+\frac12-2\varepsilon}\Big)
\longrightarrow 0\quad\text{as }|\pi|\to 0,
\end{equation}
since $H+\tfrac12-2\varepsilon>1$.

On the other hand, by the usual It\^o calculus for $B$ we know that
\[
\lim_{|\pi|\to 0}\sum_{i=0}^{n-1}\left(B_{t_i,t_{i+1}}^2-2\mathbb{B}_{t_i,t_{i+1}}\right)=t
\]
almost surely. Together with \eqref{eq:key1} this yields
\[
[\mathbf{M}]_t = \lim_{|\pi|\to 0}\sum_{i=0}^{n-1}[\mathbf{M}]_{t_i,t_{i+1}} = t,
\]
which completes the proof.
\end{proof}

Now we will introduce the It\^o formula for this rough pats
\begin{prop}\label{pro1}
Let $\mathbf{M}=(M,\mathbb{M})\in\mathscr{C}^{\alpha}$ be a rough path, and let $f\in C^{3}(\R)$. Then
\begin{equation}\label{eq:ito-rough}
f(M_{T})=f(M_{0})+\int_{0}^{T}Df(M_{u})\,\dd\mathbf{M}_{u}
+\frac{1}{2}\int_{0}^{T}D^{2}f(M_{u})\,\dd[\mathbf{M}]_{u}.
\end{equation}
\end{prop}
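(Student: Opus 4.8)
The plan is to read off the two integrals on the right-hand side of \eqref{eq:ito-rough} as limits of (compensated) Riemann sums along partitions $\pi:0=t_0<\dots<t_n=T$ with mesh $|\pi|\to0$, and then to recover the left-hand side by a telescoping-plus-Taylor argument. I would start with the regularity bookkeeping: since $M$ is continuous, $K:=M([0,T])$ is compact, and because $f\in C^{3}(\R)$ the derivatives $Df,D^{2}f,D^{3}f$ are bounded and Lipschitz on a neighbourhood of $K$. In particular $\bigl(Df(M_\cdot),D^{2}f(M_\cdot)\bigr)$ is a controlled rough path over $\mathbf{M}$, with Gubinelli derivative $D^{2}f(M_\cdot)$, so by the standard theory of rough integration the integral $\int_{0}^{T}Df(M_u)\,\dd\mathbf{M}_u$ is well defined and equals
\[
\lim_{|\pi|\to0}\ \sum_{i=0}^{n-1}\Bigl(Df(M_{t_i})\,M_{t_i,t_{i+1}}+D^{2}f(M_{t_i})\,\mathbb{M}_{t_i,t_{i+1}}\Bigr).
\]
On the other side, by Remark~\ref{rk1} the bracket $[\mathbf{M}]$ is an additive path with $\bigl|[\mathbf{M}]_{s,t}\bigr|\lesssim|t-s|^{2\alpha}$ (this uses only the $\mathscr{C}^{\alpha}$ bounds on $X$ and $\mathbb{X}$), and $u\mapsto D^{2}f(M_u)$ is $\alpha$-H\"older; since $\alpha+2\alpha>1$, the Young integral $\int_{0}^{T}D^{2}f(M_u)\,\dd[\mathbf{M}]_u$ exists and equals $\lim_{|\pi|\to0}\sum_i D^{2}f(M_{t_i})\,[\mathbf{M}]_{t_i,t_{i+1}}$.

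Next I would telescope and Taylor-expand. Writing $f(M_T)-f(M_0)=\sum_{i}\bigl(f(M_{t_{i+1}})-f(M_{t_i})\bigr)$ and expanding each increment to second order,
\[
f(M_{t_{i+1}})-f(M_{t_i})=Df(M_{t_i})\,M_{t_i,t_{i+1}}+\tfrac12\,D^{2}f(M_{t_i})\,(M_{t_i,t_{i+1}})^{2}+R_i,
\]
with $|R_i|\lesssim|M_{t_i,t_{i+1}}|^{3}\lesssim(t_{i+1}-t_i)^{3\alpha}$, so that $\bigl|\sum_i R_i\bigr|\lesssim|\pi|^{3\alpha-1}\,T\to0$ because $\alpha>\tfrac13$. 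Then I would substitute the bracket identity: in one dimension $\operatorname{Sym}(\mathbb{M}_{s,t})=\mathbb{M}_{s,t}$, so Remark~\ref{rk1} gives $(M_{t_i,t_{i+1}})^{2}=2\mathbb{M}_{t_i,t_{i+1}}+[\mathbf{M}]_{t_i,t_{i+1}}$, whence
\[
\tfrac12\,D^{2}f(M_{t_i})\,(M_{t_i,t_{i+1}})^{2}=D^{2}f(M_{t_i})\,\mathbb{M}_{t_i,t_{i+1}}+\tfrac12\,D^{2}f(M_{t_i})\,[\mathbf{M}]_{t_i,t_{i+1}}.
\]
Inserting this in the Taylor expansion and summing over $i$ splits $f(M_T)-f(M_0)$ into precisely the compensated Riemann sum for the rough integral, plus $\tfrac12$ times the Riemann sum for the Young integral, plus the remainder $\sum_i R_i$. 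Letting $|\pi|\to0$ and using the two convergences recorded above together with $\sum_i R_i\to0$ yields \eqref{eq:ito-rough}.

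The telescoping and the Taylor estimate are routine; the points that genuinely require care — and which I would regard as the crux — are the two ``limit of Riemann sums'' identifications. The first needs $f\in C^{3}$, so that $\bigl(Df(M),D^{2}f(M)\bigr)$ is a genuine controlled rough path and the rough integral coincides with its compensated Euler approximation; the second needs the bracket to inherit $2\alpha$-H\"older regularity from the $\mathscr{C}^{\alpha}$ bounds (via Remark~\ref{rk1}), which is exactly what makes $D^{2}f(M)$ Young-integrable against $[\mathbf{M}]$, and this is where $\alpha>\tfrac13$ enters. Equivalently, one can avoid the Riemann-sum statements altogether and argue through the sewing lemma: the germ $\Xi_{s,t}:=Df(M_s)\,M_{s,t}+D^{2}f(M_s)\,\mathbb{M}_{s,t}+\tfrac12\,D^{2}f(M_s)\,[\mathbf{M}]_{s,t}$ has $\delta\Xi_{s,u,t}=O(|t-s|^{3\alpha})$ and $\Xi_{s,t}=f(M_t)-f(M_s)-R_{s,t}$ with $|R_{s,t}|\lesssim|t-s|^{3\alpha}$, so by uniqueness of the sewn function it equals both $f(M_\cdot)-f(M_0)$ and $\int_0^{\cdot}Df(M_u)\,\dd\mathbf{M}_u+\tfrac12\int_0^{\cdot}D^{2}f(M_u)\,\dd[\mathbf{M}]_u$.
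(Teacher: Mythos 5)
Your proof is correct and follows essentially the same route as the paper: telescoping a second-order Taylor expansion along a partition, using Remark~\ref{rk1} to trade $\tfrac12(M_{s,t})^{2}$ for $\mathbb{M}_{s,t}+\tfrac12[\mathbf{M}]_{s,t}$, and killing the remainder via $3\alpha>1$. Your version is in fact slightly more careful than the paper's in two places the paper glosses over — writing the Taylor expansion with the honest term $\tfrac12 D^{2}f(M_{s})(M_{s,t})^{2}$ rather than $D^{2}f(M_{s})\mathbb{M}_{s,t}$ (which for a non-geometric lift is not a genuine $O(|t-s|^{3\alpha})$-remainder expansion), and justifying that the two Riemann sums converge to the rough integral and the Young integral respectively.
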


\begin{proof}
Assume without loss of generality that $f\in C_{b}^{3}$. Since $Df\in C_{b}^{2}$, the pair
$(Df(M),D^{2}f(M))$ is a controlled path with respect to $M$. For $0\le s<t\le T$ we have the Taylor expansion
\begin{equation}
f(M_{t})-f(M_{s})=Df(M_{s})M_{s,t}+D^{2}f(M_{s})\mathbb{M}_{s,t}+R_{s,t},
\end{equation}
where
\begin{equation}
R_{s,t}:=\int_{0}^{1}\int_{0}^{1}D^{3}f\big(M_{s}+r_{1}r_{2}M_{s,t}\big)
(M_{s,t}\otimes M_{s,t})\,r_{1}\,\dd r_{2}\,\dd r_{1}.
\end{equation}
Thus
\[
|R_{s,t}|\leq\|f\|_{C_{b}^{3}}|M_{s,t}|^{3}\leq\|f\|_{C_{b}^{3}}\|M\|_{\alpha}^{3}|t-s|^{3\alpha},
\]
so that, for any sequence of partitions $\pi$ with mesh $|\pi|\to 0$,
\[
\sum_{[s,t]\in\pi}|R_{s,t}|\longrightarrow 0.
\]

Since the Hessian $D^{2}f(M_{s})$ is symmetric, it kills the antisymmetric part of $\mathbb{M}_{s,t}$, that is,
\[
D^{2}f(M_{s})\mathbb{M}_{s,t}=D^{2}f(M_{s})\operatorname{Sym}(\mathbb{M}_{s,t}).
\]
Using Definition~\ref{rk1} we have
\[
\operatorname{Sym}(\mathbb{M}_{s,t})=\tfrac12\big(M_{s,t}\otimes M_{s,t}-[\mathbf{M}]_{s,t}\big),
\]
and therefore
\[
D^{2}f(M_{s})\mathbb{M}_{s,t}
=\frac12D^{2}f(M_{s})(M_{s,t}\otimes M_{s,t})-\frac12D^{2}f(M_{s})[\mathbf{M}]_{s,t}.
\]
Summing over a partition and passing to the limit $|\pi|\to 0$ gives \eqref{eq:ito-rough}.
\end{proof}

Now return to the mixed fractional CIR equation \eqref{eqn1}. Fix $\varepsilon_0>0$ and define the stopping time
\[
\tau^{r}_{\varepsilon_0} := \inf\{s > 0: r_s = \varepsilon_0\}.
\]
For any $0\le s \le t \le \tau^r_{\varepsilon_0}$, define
\[
z_t := \frac{2}{\sigma}\sqrt{r_t}.
\]
On $[\varepsilon_0,\infty)$ the function $f(x)=\frac{2}{\sigma}\sqrt{x}$ is $C^3_b$, so by
Proposition~\ref{pro1} and Lemma~\ref{lem:bracket-m} we obtain
\begin{equation}
\begin{aligned}
z_{t} =& z_s + \int_{s}^{t} \frac{1}{\sigma \sqrt{r_u}}k(\theta- r_u)\,\dd u 
+ \int_{s}^{t} \frac{1}{\sigma \sqrt{r_u}} \sigma \sqrt{r_u}\,\dd\mathbf{M}_{u} \\
&\quad + \frac{1}{2} \int_{s}^{t}-\frac{1}{2}\frac{1}{\sigma}r_u^{-\frac{3}{2}} \sigma^2 r_u\,\dd[\mathbf{M}]_{u}\\
=&z_s + \int_{s}^{t} \left(\frac{2k\theta}{\sigma^2 z_u}- \frac{k z_u}{2}\right)\,\dd u 
+ M_{s,t} -\int_{s}^{t}\frac{1}{2z_u }\,\dd u\\ 
=&z_s + \int_{s}^{t} \left(\frac{4k\theta-\sigma^2}{2\sigma^2 z_u}- \frac{k z_u}{2}\right) \,\dd u + 
M_{s,t}.
\end{aligned}    
\end{equation}
Define
\begin{equation}
m :=
\frac{2 k\theta-\sigma^2}{\sigma^2},
\end{equation}
we obtain the singular equation
\begin{equation}
\label{eqnsingular}
\dd z_t = \left[(m+\frac{1}{2})z_t^{-1} - \frac{k}{2} z_t\right] \dd t +\dd M_t. 
\end{equation}
A standard localization argument (using the stopping times $\tau^r_{\varepsilon_0}$ or a smooth approximation of
$\sqrt{\cdot}$ near zero) shows that \eqref{eqnsingular} actually holds for all $t$ strictly smaller than the first
hitting time of $0$ by $z$.

\section{Positivity of the mixed fractional CIR process}\label{sec:positivity}

In this section we turn to the positivity of the mixed fractional CIR process. The square-root transformation developed
in the previous section shows that the problem reduces to the analysis of a singular mean-reverting equation for the
process $z_t=\tfrac{2}{\sigma}\sqrt{r_t}$. The main difficulty stems from the fact that the drift in the $z$-equation
blows up near zero, while the driving signal $M=B+B^H$ is rough and non-Markovian. Our strategy is to combine sharp
modulus-of-continuity estimates for the Brownian motion $B$ with a law of the iterated logarithm for the fractional
component $B^H$, and to exploit these pathwise estimates in order to rule out the possibility that $z$ hits zero in
finite time.

Before proving the positivity of the solution $(r_t)_{t\ge0}$ to \eqref{eqn1}, we recall two classical pathwise results
for Brownian motion and fractional Brownian motion. The first one is the L\'evy's modulus of continuity, see e.g.\ \cite{1989Brownian}

\begin{lem}
\label{lem1}
Let $B$ be a one-dimensional standard Brownian motion. Put $g(t) := \sqrt{2t\log(1/t)}$. Then
\begin{equation}
\label{eqn2}
\PP\left[ \limsup_{\delta \to 0^+} \frac{1}{g(\delta)}\sup_{\tiny \begin{aligned}
0 \le& s < t < 1 \\    
& t - s \le \delta
\end{aligned}
}\left\lvert B_t - B_s \right\rvert = 1 
\right] = 1.   
\end{equation}
\end{lem}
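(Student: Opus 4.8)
The plan is to prove the two inequalities $\limsup\le1$ and $\limsup\ge1$ (each almost surely) for the quantity $\Omega(\delta)/g(\delta)$, where $\Omega(\delta):=\sup\{|B_t-B_s|:0\le s<t<1,\ t-s\le\delta\}$. Both directions rest on the same ingredients: the Gaussian tail estimates $\frac{c}{x}e^{-x^2/2}\le\PP(N(0,1)>x)\le e^{-x^2/2}$ for $x\ge1$, the Borel--Cantelli lemmas, and the fact that increments of $B$ over disjoint intervals are independent with $B_{(k+1)\eta}-B_{k\eta}\sim N(0,\eta)$. Throughout I discretize $\delta$ along a geometric sequence $\delta_k=\theta^k$, with $\theta\in(0,1)$ sent to $1$ at the end, using that $\Omega$ is nondecreasing, that $g$ is increasing on $(0,1/e)$, and that $g(\delta_{k+1})/g(\delta_k)\to\sqrt\theta$, so that controlling $\Omega(\delta_k)/g(\delta_k)$ along the sequence suffices up to a factor tending to $1/\sqrt\theta$.

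For the lower bound I fix $\epsilon>0$, take $\delta=2^{-n}$, and note $\Omega(2^{-n})\ge\mathcal M_n:=\max_{0\le k<2^n}|B_{(k+1)2^{-n}}-B_{k2^{-n}}|$, a maximum of $2^n$ independent $N(0,2^{-n})$ variables. By independence, $\PP(\mathcal M_n<(1-\epsilon)g(2^{-n}))\le(1-p_n)^{2^n}\le e^{-2^np_n}$ with $p_n=\PP(N(0,1)>(1-\epsilon)\sqrt{2n\log2})\ge\frac{c}{\sqrt n}2^{-(1-\epsilon)^2n}$ by the Gaussian lower tail; hence $2^np_n\gtrsim n^{-1/2}2^{(2\epsilon-\epsilon^2)n}\to\infty$ and $\sum_n e^{-2^np_n}<\infty$. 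Borel--Cantelli then gives $\mathcal M_n\ge(1-\epsilon)g(2^{-n})$ for all large $n$, a.s., so $\limsup_n\Omega(2^{-n})/g(2^{-n})\ge1-\epsilon$, and letting $\epsilon\downarrow0$ along a sequence yields $\limsup_{\delta\to0}\Omega(\delta)/g(\delta)\ge1$ a.s.

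The upper bound is the delicate part. Fix $\epsilon>0$. A union bound over the $2^n$ dyadic increments at generation $n$, together with the Gaussian upper tail, gives $\PP(\max_k|B_{(k+1)2^{-n}}-B_{k2^{-n}}|>(1+\epsilon)g(2^{-n}))\le2\cdot2^{-(2\epsilon+\epsilon^2)n}$, which is summable, so by Borel--Cantelli almost surely every generation-$n$ dyadic increment is $\le(1+\epsilon)g(2^{-n})$ for all large $n$. A direct chaining argument (sandwiching a general pair $s<t$ between the innermost dyadic points of each fine generation and telescoping) then yields only $\Omega(\delta)\le C\,g(\delta)$ for a universal $C>1$. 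To reach the sharp constant I would, for each $\delta=\delta_k$, choose a much finer scale $n_k$ with $2^{-n_k}\asymp\delta_k^{\beta}$ for a suitable exponent $\beta>1$, and run a second Borel--Cantelli over all \emph{pairs} of generation-$n_k$ dyadic points $a<b$ with $b-a\le\delta_k$: since $B_b-B_a\sim N(0,b-a)$ with $b-a\le\delta_k$, each such pair has $\PP(|B_b-B_a|>(1+\epsilon)g(\delta_k))\le2\delta_k^{(1+\epsilon)^2}$, while the number of pairs is $\le\delta_k2^{2n_k}$; choosing $\beta$ so that $\delta_k^{1+(1+\epsilon)^2}2^{2n_k}$ is summable while still $2^{-n_k}\ll\delta_k$ makes the event ``some such pair exceeds $(1+\epsilon)g(\delta_k)$'' occur only finitely often. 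Writing $[s,t]$ with $t-s\le\delta_k$ as $[a,b]\cup[s,a]\cup[b,t]$ with $a,b$ the innermost generation-$n_k$ dyadic points, the middle increment is $\le(1+\epsilon)g(\delta_k)$ by the pair estimate, while the two boundary pieces, telescoped into generation-$j$ increments ($j>n_k$) controlled by the first Borel--Cantelli, sum to $O(g(2^{-n_k}))=o(g(\delta_k))$ because $2^{-n_k}\ll\delta_k$ and the logarithmic factors are comparable. Hence $\Omega(\delta_k)\le(1+\epsilon+o(1))g(\delta_k)$, and the geometric-sequence reduction with $\theta\uparrow1$ and $\epsilon\downarrow0$ gives $\limsup_{\delta\to0}\Omega(\delta)/g(\delta)\le1$ a.s.; combined with the lower bound this proves \eqref{eqn2}.

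The main obstacle is precisely this last point: a naive dyadic chaining only produces a universal constant strictly bigger than $1$, and pinning it to $1$ forces one to union-bound over dyadic \emph{pairs} at a scale $\delta^\beta$ finer than $\delta$ --- fine enough that the combinatorial size of the pair family is beaten by the Gaussian tail, yet coarse enough that the telescoping remainder is negligible. The matching lower bound, by contrast, is the easy direction, as it only exploits the independence of disjoint increments and the Gaussian lower tail through a single Borel--Cantelli argument.
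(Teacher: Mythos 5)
The paper does not prove this lemma at all: it is quoted as a classical result (L\'evy's theorem) with a citation to Karatzas--Shreve, so there is no ``paper proof'' to compare against. Your argument is essentially the standard textbook proof of L\'evy's modulus, and the quantitative choices all check out. The lower bound via the maximum of the $2^n$ independent generation-$n$ dyadic increments and the Gaussian lower tail is correct: $2^n p_n\gtrsim n^{-1/2}2^{(2\epsilon-\epsilon^2)n}\to\infty$, so $e^{-2^np_n}$ is summable and first Borel--Cantelli applies. For the upper bound, your diagnosis of the difficulty is exactly right (single-generation chaining only gives a constant $C>1$), and your fix works: with $2^{-n_k}\asymp\delta_k^{\beta}$ the pair union bound costs $\delta_k^{1+(1+\epsilon)^2-2\beta}$ along $\delta_k=\theta^k$, which is summable precisely when $\beta<1+\epsilon+\epsilon^2/2$, so the window $\beta\in(1,1+\epsilon+\epsilon^2/2)$ is nonempty and simultaneously forces the chaining remainder $O(g(2^{-n_k}))=O(\delta_k^{(\beta-1)/2}g(\delta_k))=o(g(\delta_k))$. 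The geometric-grid interpolation with $g(\delta_k)/g(\delta_{k+1})\to\theta^{-1/2}$ and $\theta\uparrow1$, $\epsilon\downarrow0$ then closes both directions.

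Two small points to tidy in a full write-up: (i) handle the degenerate case where $[s,t]$ contains at most one generation-$n_k$ dyadic point (so the ``innermost pair'' $a\le b$ does not exist); there the whole increment is absorbed into the $O(g(2^{-n_k}))$ chaining term, but this should be said explicitly; (ii) the reduction from general $\delta$ to the geometric sequence uses that $g$ is increasing only on $(0,1/e)$, so restrict to $\delta$ below that threshold. Neither is a gap in the idea, only in the bookkeeping.
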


Then the second one is the Law of the iterated logarithm for fBm, see \cite{hu2008,1995On}: 
\begin{lem}
\label{lem2}
Let $B^H$ be a one-dimensional fractional Brownian motion with Hurst index $H\in(0,1)$. Then
\begin{equation}
\label{eqn3}
\limsup_{t \to 0^+}\frac{\left\lvert B^H_t\right\rvert }{t^H\sqrt{\log\log t^{-1}}} = c_H
\end{equation}
with probability one, where $c_H \in (0,\infty)$ is a suitable constant depending only on $H$.
\end{lem}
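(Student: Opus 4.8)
The plan is to combine a zero--one law, which shows that the $\limsup$ in question is almost surely a deterministic constant $c_H\in[0,\infty]$, with a finite upper bound and a strictly positive lower bound; together these force $c_H\in(0,\infty)$, and the exact value of $c_H$ is not needed for the stated conclusion. For the zero--one law I would use the Volterra representation $B^H_t=\int_0^t K_H(t,s)\,\dd W_s$ of fractional Brownian motion driven by a standard Brownian motion $W$: since $B^H_s$ for $s\le\varepsilon$ depends only on $W|_{[0,\varepsilon]}$, the germ $\sigma$-field $\bigcap_{\varepsilon>0}\sigma(B^H_s:s\le\varepsilon)$ is contained in the germ field of $W$, which is trivial by Blumenthal's zero--one law; as $\limsup_{t\to0^+}|B^H_t|/(t^H\sqrt{\log\log t^{-1}})$ is measurable with respect to that germ field, it is almost surely constant.

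For the upper bound I would work along the geometric sequence $t_n:=\theta^{-n}$ with a fixed $\theta>1$ and estimate $S_n:=\sup_{t\in[t_{n+1},t_n]}|B^H_t|$. By $H$-self-similarity, $S_n$ has the law of $t_n^{H}\sup_{v\in[\theta^{-1},1]}|B^H_v|$, a centred Gaussian supremum whose mean $\kappa_\theta$ is finite (Fernique) and which, by the Borell--TIS inequality with variance proxy $\sup_{v\le1}\operatorname{Var}(B^H_v)=1$, satisfies $\PP(S_n\ge t_n^H(\kappa_\theta+\lambda))\le 2\exp(-\lambda^2/2)$. Taking $\lambda=\lambda_n$ of order $\sqrt{\log\log t_n^{-1}}$ with a sufficiently large multiplicative constant makes these probabilities summable, and the first Borel--Cantelli lemma then yields, almost surely, a finite constant $C_0$ with $|B^H_t|\le C_0\,t^{H}\sqrt{\log\log t^{-1}}$ for all small $t$; in particular $c_H\le C_0<\infty$.

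For the lower bound I would fix a (possibly different, large) $\theta>1$ and consider the increments $D_n:=B^H_{t_n}-B^H_{t_{n+1}}$ over the disjoint intervals $(t_{n+1},t_n]$. By stationarity of increments $\operatorname{Var}(D_n)=(t_n-t_{n+1})^{2H}=t_n^{2H}(1-\theta^{-1})^{2H}$, and a short computation with the fractional Brownian covariance shows that $\operatorname{Corr}(D_n,D_m)$ decays geometrically, like $\theta^{-(1-H)|n-m|}$. For $\theta$ large the standard deviation of $D_n/t_n^H$ exceeds $1/\sqrt{2}$, so the Gaussian lower tail gives $\sum_n\PP\big(|D_n|\ge t_n^H\sqrt{\log\log t_n^{-1}}\big)=\infty$; since the correlations are summable, the Kochen--Stone lemma (a second-moment version of the second Borel--Cantelli lemma), upgraded to probability one by the zero--one law above, shows that these events occur infinitely often. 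On such an event $|B^H_{t_n}|\ge|D_n|-|B^H_{t_{n+1}}|$, and, using the already-established upper bound together with $t_{n+1}=\theta^{-1}t_n$ and $\log\log t_{n+1}^{-1}/\log\log t_n^{-1}\to1$, one obtains $|B^H_{t_n}|\ge\big(1-C_0\theta^{-H}+o(1)\big)t_n^H\sqrt{\log\log t_n^{-1}}$. Choosing $\theta$ so large that $C_0\theta^{-H}<1$ gives $c_H\ge 1-C_0\theta^{-H}>0$.

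The main obstacle is the lower bound: because fractional Brownian motion has long-range dependent increments, the second Borel--Cantelli lemma cannot be applied verbatim, and one must quantify the geometric decay of the correlations between the blocks $D_n$ and feed it into a Kochen--Stone type estimate, while also choosing the scale ratio $\theta$ large enough that the \emph{finer-scale} contribution $B^H_{t_{n+1}}$ is negligible against $D_n$ on the iterated-logarithm scale. Identifying the exact value of $c_H$ would require a sharper large-deviations analysis of the supremum of $B^H$ over a window; this is not needed here, and in any case the local law of the iterated logarithm is classical and available in \cite{hu2008,1995On}.
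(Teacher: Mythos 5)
The paper does not prove Lemma~\ref{lem2} at all: it is quoted as a classical result with references to Arcones~\cite{1995On} and Biagini--Hu--{\O}ksendal--Zhang~\cite{hu2008}, and the constant $c_H$ is never identified (indeed only the upper-bound half, via \eqref{eq:LIL-xi}, is ever used in Theorem~\ref{thm1}). So there is no in-paper argument to compare against; what you have written is a from-scratch proof of the weaker statement that the $\limsup$ is an almost sure constant in $(0,\infty)$, which is exactly what the paper needs. Your three ingredients are the standard ones and are essentially sound: the germ-field triviality via the Volterra representation correctly yields that the $\limsup$ is a.s.\ deterministic; the upper bound via self-similarity over geometric scales, Fernique/Borell--TIS with variance proxy $1$, and the first Borel--Cantelli lemma is complete as sketched (note $C_0$ there is deterministic, which you implicitly use later); and your covariance computation for the block increments $D_n$, giving $\operatorname{Corr}(D_n,D_m)\asymp\theta^{-(1-H)|n-m|}$, is correct.

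The one place where the argument is asserted rather than proved is the dependent second Borel--Cantelli step. Summability of the correlations does not by itself verify the Kochen--Stone hypothesis
$\liminf_n \bigl(\sum_{k\le n}\PP(A_k)\bigr)^2\big/\sum_{j,k\le n}\PP(A_j\cap A_k)>0$: you need a quantitative bound on the joint tails $\PP(|D_j|\ge a_j,\,|D_k|\ge a_k)$ for correlated Gaussians, e.g.\ a normal comparison inequality of Berman/Slepian/Li--Shao type, $\PP(A_j\cap A_k)\le\PP(A_j)\PP(A_k)\exp\bigl(C\rho_{jk}a_ja_k/(1-\rho_{jk}^2)\bigr)$ or the classical condition $\rho_n\log n\to0$. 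With $a_n^2\asymp\log n$ and $\rho_{jk}$ decaying geometrically this works, but the near-diagonal pairs $|j-k|\lesssim\log\log j$ have to be handled separately (their joint probability is only bounded by $\min(\PP(A_j),\PP(A_k))$, and one checks their total contribution is $o\bigl((\sum_{k\le n}\PP(A_k))^2\bigr)$). You flag this as the main obstacle, which is the right diagnosis, but as written it is a genuine gap rather than a routine omission. The remaining bookkeeping --- upgrading $\PP(A_n\text{ i.o.})>0$ to probability one via the zero--one law, and absorbing $|B^H_{t_{n+1}}|\le C_0\theta^{-H}t_n^H\sqrt{\log\log t_{n+1}^{-1}}$ by taking the lower-bound scale ratio $\theta$ large after $C_0$ is fixed --- is correct. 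Since the paper leans on the literature here anyway, citing Arcones' LIL for Gaussian processes (which also identifies $c_H$) would be the shorter route; your argument is a legitimate self-contained alternative once the normal comparison step is supplied.
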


With these two lemmas We can state the main positivity result:
\begin{thm}
\label{thm1}
Consider \eqref{eqn1} with $r_0 > 0$. Let $k,\theta>0$, $H > \frac{1}{2}$, and assume that
\begin{equation}
\label{eqn4}
2 k\theta > \sigma^2,  
\end{equation}
which is the classical Feller condition. Then the process $(r_t)_{t\ge0}$ defined by \eqref{eqn1} is strictly positive
almost surely.
\end{thm}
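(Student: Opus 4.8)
The plan is to reduce everything to the singular equation~\eqref{eqnsingular} satisfied by $z_t=\tfrac2\sigma\sqrt{r_t}$ and to prove that $z$ never reaches $0$. Since $r_t>0\iff z_t>0$, and since $z$ is continuous, starts from $z_0=\tfrac2\sigma\sqrt{r_0}>0$, and solves~\eqref{eqnsingular} on $[0,\tau)$ with $\tau:=\inf\{t>0:z_t=0\}$, it suffices to show $\PP(\tau<\infty)=0$. The Feller condition~\eqref{eqn4} is encoded in the coefficient of the singular drift, $m+\tfrac12=\tfrac{2k\theta}{\sigma^2}-\tfrac12>\tfrac12$, which is exactly the critical threshold above which the corresponding one-dimensional \emph{Brownian} equation has an unattainable origin. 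A direct pathwise contradiction---assume $z_\tau=0$, integrate~\eqref{eqnsingular} on $[\tau-h,\tau]$, and play the size of the singular drift off against a modulus bound for $M_{\tau-h,\tau}$---is tempting, but because the Feller condition is critical the two sides are balanced at every scale, so a purely one-sided estimate cannot close. I would instead compare $z$ with the classical CIR object.

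Let $\bar z$ solve the analogue of~\eqref{eqnsingular} with the mixed driver $M$ replaced by its Brownian part $B$,
\[
\dd\bar z_t=\Big[(m+\tfrac12)\bar z_t^{-1}-\tfrac{k}{2}\bar z_t\Big]\,\dd t+\dd B_t,\qquad \bar z_0=z_0 .
\]
Undoing the square-root transformation, $\bar r_t:=\tfrac{\sigma^2}{4}\bar z_t^{2}$ solves the classical CIR equation $\dd\bar r_t=k(\theta-\bar r_t)\,\dd t+\sigma\sqrt{\bar r_t}\,\dd B_t$; hence, under~\eqref{eqn4}, classical Feller theory gives $\bar r_t>0$, so $\bar z_t>0$, almost surely for all $t$, with $\inf_{[0,T]}\bar z_t>0$ a.s.\ for each $T$. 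On the interval where both $z$ and $\bar z$ are positive the difference $w:=z-\bar z$ satisfies the \emph{linear} equation
\[
\dd w_t=-a(t)\,w_t\,\dd t+\dd B^{H}_t,\qquad a(t):=\frac{m+\tfrac12}{z_t\bar z_t}+\frac{k}{2}>0,\qquad w_0=0 ,
\]
so the Brownian noise cancels and $w$ is driven by the fractional part alone through a Young integral; variation of constants gives $w_t=\int_0^t\exp\!\big(-\!\int_s^t a(u)\,\dd u\big)\,\dd B^{H}_s$.

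The heart of the argument is a pathwise estimate showing $w$ is negligible relative to $\bar z$ near the dangerous set. I would bootstrap on $[0,\tau_*)$ with $\tau_*:=\inf\{t:|w_t|=\tfrac12\bar z_t\}$: there $z$ and $\bar z$ are comparable, so $a(t)\ge\tfrac{2(m+\frac12)}{3}\,\bar z_t^{-2}$, which means the Green kernel $\exp(-\!\int_s^t a)$ is sharply peaked near $s=t$, with effective width of order $\bar z_t^{2}$ precisely when $\bar z_t$ is small. Since $B^{H}$ is $(H-\varepsilon)$-H\"older with $H-\varepsilon>\tfrac12$ (Lemma~\ref{lem2} controlling the behaviour near $t=0$, the modulus of continuity globally), it moves only $O\big(\bar z_t^{\,2(H-\varepsilon)}\big)=o(\bar z_t)$ over that window; an integration by parts in the variation-of-constants formula, together with the crude tail bound $\exp(-\!\int_0^t a)\le e^{-kt/2}$, should then give $\sup_{[0,\tau_*\wedge T]}|w_t|/\bar z_t<\tfrac12$, forcing $\tau_*>T$; letting $T\to\infty$ yields $z_t\ge\tfrac12\bar z_t>0$, hence $r_t>0$, a.s.\ for all $t$. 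L\'evy's modulus (Lemma~\ref{lem1}) enters in controlling $\bar z$ near its minima---one must bound $\int_s^t\bar z_u^{-2}\,\dd u$, and $\bar z$ still fluctuates like Brownian motion there---so as to justify the ``effective width $\asymp\bar z_t^{2}$'' claim.

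The main obstacle is precisely this last estimate: because the Feller condition is critical, the fractional perturbation, once funnelled through the peaked kernel $\exp(-\!\int_s^t a)$, must be shown \emph{not} to spoil the borderline non-attainability of the Brownian origin, for every $H>\tfrac12$ and with no smallness assumption on the fractional Hölder norm. Turning the heuristic scaling into a rigorous, self-improving bound---choosing the windows, summing the dyadic contributions, and handling the region where $\bar z$ is not yet small---is where the sharp forms of Lemmas~\ref{lem1} and~\ref{lem2} do the work; the remainder (the continuation argument, the $\varepsilon$-room in the exponents, the localization at $\tau$) is routine.
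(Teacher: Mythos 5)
Your route is genuinely different from the paper's: the paper runs exactly the ``direct pathwise contradiction'' you set aside (localize at the hypothetical hitting time $\tau$, let $\tau_\xi$ be the last visit of $z$ to level $\xi$, reserve part of the singular drift to beat the fBm increment via Lemma~\ref{lem2}, and absorb the Brownian increment into the remaining drift via Lemma~\ref{lem1}). Your algebra is sound as far as it goes: $\bar r_t=\tfrac{\sigma^2}{4}\bar z_t^2$ is indeed a classical CIR process, the Brownian parts cancel in $w=z-\bar z$, and $w_t=\int_0^t\exp\bigl(-\int_s^t a(u)\,\dd u\bigr)\,\dd B^H_s$ with $a>0$ is a correct Young/ODE computation on the interval where both processes are positive. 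But the decisive step---the bootstrap $\sup_t|w_t|/\bar z_t<\tfrac12$---is not proved, and as formulated it does not close.

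Here is the concrete failure. Suppose $t$ is a time at which $\bar z$ reaches a new running minimum $\eta$, and let $s'$ be the last time $\bar z_{s'}=2\eta$. Then $w_t=e^{-\int_{s'}^t a}\,w_{s'}+\int_{s'}^t e^{-\int_s^t a}\,\dd B^H_s$. Your kernel-width argument does control the second term: on $[s',t]$ one has $a\gtrsim\eta^{-2}$, so integration by parts gives a contribution $O(\eta^{2(H-\varepsilon)})=o(\eta)$. The problem is the inherited term: the induction hypothesis gives only $|w_{s'}|\le\tfrac12\bar z_{s'}=\eta$, so to conclude $|w_t|<\tfrac12\eta$ you need $\int_{s'}^t a\ge\log 2+o(1)$. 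But on $[s',t]$ one also has $z\bar z\ge\tfrac12\eta^2$, hence $a\le 2(m+\tfrac12)\eta^{-2}+\tfrac{k}{2}$, and L\'evy's modulus (Lemma~\ref{lem1}) permits $\bar z$ to fall from $2\eta$ to $\eta$ in time as short as $c\,\eta^2/\log(1/\eta)$; for such a downcrossing $\int_{s'}^t a=O(1/\log(1/\eta))\to0$, the damping factor is $1-o(1)$, and the bound does not self-improve from $\eta$ to $\eta/2$. What is needed is a lower bound on $\int_{s'}^t\bar z_u^{-2}\,\dd u$ along \emph{every} downcrossing, which is a distributional fact about the Bessel-type process $\bar z$ (excursion/scale-function territory, requiring a Borel--Cantelli over crossings), not a consequence of the moduli in Lemmas~\ref{lem1}--\ref{lem2}. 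So the ``effective width $\asymp\bar z_t^2$'' heuristic is precisely where the proof is missing, as you suspected. Two smaller points: your diagnosis that ``the Feller condition is critical'' is off---under \eqref{eqn4} one has $m>0$, so the drift coefficient $m+\tfrac12$ in \eqref{eqnsingular} is strictly above the Bessel-critical value $\tfrac12$; the genuine obstruction is the $\sqrt{\log(1/\delta)}$ factor in L\'evy's modulus at the diffusive scale $\delta\sim\xi^2$, and that same delicate balance is the point the paper's own argument must confront when it minimizes its function $f$.
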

\begin{proof}
We work with the transformed process $z_t = \frac{2}{\sigma}\sqrt{r_t}$, which satisfies \eqref{eqnsingular} up to its
first hitting time of $0$. Note that $m > 0$ by \eqref{eqn4}. Define
\[
\tau := \inf\{s > 0: z_s = 0\}.
\]
We will show that $\PP(\tau<\infty)=0$. Suppose $\tau$ is finite, by Lemma \ref{lem1},for any $\varepsilon > 0$ such that $2\alpha_0 m +1> \left(1 + \varepsilon\right)^2$ for some $\alpha_0 \in (0, 1)$, there always exists $\delta(\omega, \varepsilon)$ such that 
\begin{equation}
\label{eqnbmloc}
\left\lvert B_{\tau} - B_t\right\rvert \le (1+\varepsilon)\sqrt{2\delta_1(t)\log \frac{1}{\delta_1(t)}} \quad a.s. 
\end{equation}
for all $\delta_1(t) := \tau -t \in (0, \delta)$.

Assume by contradiction that $\tau<\infty$ with positive probability. Fix $\omega$ in a full-measure event on which all
the pathwise estimates below hold, and work pathwise. For $t<\tau$, integrating \eqref{eqnsingular} from $t$ to $\tau$
yields
\begin{equation}\label{eq:z-int}
z_t + \int_{t}^{\tau}\left((m+\frac{1}{2}) z_s^{-1} - \frac{k}{2}z_s \right) \,\dd s = M_t - M_{\tau},   
\end{equation}
where $M=B+B^H$.

Fix $\gamma^\ast\in(0,H-\tfrac12)$. Consider $\tau - \delta_2 < t < \tau$ where $\tau_2$ is small enough so that we can always ensure $(m+\frac{1}{2}) z_s^{-1} - \frac{k}{2}z_s > 0$. For 
\begin{equation}
z_t + \int_{t}^{\tau}\left((m+\frac{1}{2}) z_s^{-1} - \frac{k}{2}z_s \right) \,ds = M_t - M_{\tau},   
\end{equation}
since $M$ has H\"older continuous paths of any order $\alpha< 1/2$, there
exist random constants $C(\omega, \gamma^\ast)>0$ and $\delta_2(\omega)>0$ such that
\begin{equation}\label{eq:z-upper}
z_t \le C(\omega, \gamma^\ast)|\tau - t|^{\frac{1}{2} - \gamma^\ast},\qquad
\text{for all }t\in(\tau-\delta_2,\tau).
\end{equation}
In particular, $z_t\to0$ as $t\uparrow\tau$.

Fix $\xi>0$ small and define $\tau_\xi := \sup\{s \in (\tau- \min\{\delta, \delta_2\}, \tau): z_s = \xi \}$, so that we have 
\begin{equation}
-\xi = \int_{\tau_\xi}^{\tau} \left((m+\frac{1}{2}) z_s^{-1} - \frac{k}{2}z_s \right) \,ds + M_{\tau} - M_{\tau_\xi}.    
\end{equation}

Since we Suppose
\begin{equation}
M_t = B_t + B_t^H,   
\end{equation}
we have
\begin{equation}
-\xi = \int_{\tau_\xi}^{\tau} \left((m+\frac{1}{2}) z_s^{-1} - \frac{k}{2}z_s \right) \,ds + B_{\tau} - B_{\tau_\xi} + B^H_{\tau} - B^H_{\tau_\xi}.     
\end{equation}

Then there $\exists \alpha \in (\alpha_0, 1)$ that 
\begin{equation}
\begin{aligned}
\left\lvert B_{\tau} - B_{\tau_\xi}\right\rvert + \left\lvert B^H_{\tau} - B^H_{\tau_\xi} \right\rvert \ge& \int_{\tau_\xi}^{\tau} (1-\alpha)m z_s^{-1}  \,ds \\&+ \left((\alpha m+\frac{1}{2}) \xi^{-1} - \frac{k}{2}\xi \right)(\tau - \tau_\xi) + \xi,      
\end{aligned}
\end{equation}
notice $\tau - \tau_\xi < \delta$, almost surely, we have the following
\begin{equation}
\begin{aligned}
&(1+\varepsilon)\sqrt{2\delta_1(\tau_\xi)\log \frac{1}{\delta_1(\tau_\xi)}} + \left\lvert B^H_{\tau} - B^H_{\tau_\xi} \right\rvert \ge \int_{\tau_\xi}^{\tau} (1-\alpha)m z_s^{-1}  \,ds + \left((\alpha m+\frac{1}{2}) \xi^{-1} - \frac{k}{2}\xi \right)(\tau - \tau_\xi) + \xi,\\
&\left\lvert B^H_{\tau} - B^H_{\tau_\xi}\right\rvert \ge \int_{\tau_\xi}^{\tau} (1-\alpha)m z_s^{-1}  \,ds + \left((\alpha m+\frac{1}{2}) \xi^{-1} - \frac{k}{2}\xi \right)(\tau - \tau_\xi) -(1+\varepsilon)\sqrt{2\delta_1(\tau_\xi)\log \frac{1}{\delta_1(\tau_\xi)}} + \xi.
\end{aligned}
\end{equation}

From now to discuss $\left((\alpha m+\frac{1}{2}) \xi^{-1} - \frac{k}{2}\xi \right)(\tau - \tau_\xi) -(1+\varepsilon)\sqrt{2\delta_1(\tau_\xi)\log \frac{1}{\delta_1(\tau_\xi)}} + \xi$, notice
$\exists \tilde{\varepsilon} > 0 $ such that $\forall \varepsilon < \tilde{\varepsilon}$, $\frac{(\alpha m+\frac{1}{2})}{\varepsilon} - \frac{k}{2} \varepsilon > \frac{\alpha_0 m+\frac{1}{2}}{\varepsilon}$
(Note that $\alpha_0$ has been fixed before \eqref{eqnbmloc} and $\alpha \in (\alpha_0, 1)$). Consider
\begin{equation}
\begin{aligned}
f(x) &= \frac{2\alpha_0 m+1}{2\xi} x - (1+ \varepsilon) \sqrt{2x\log \frac{1}{x}} + \xi,\\
f_\gamma(x) &= \frac{2\alpha_0 m+1}{2\xi} x - (1+ \varepsilon) \sqrt{2}x^\gamma + \xi,    
\end{aligned}
\end{equation}
 where $\gamma \in (0, \frac{1}{2})$. We know

\begin{align*}
f_\gamma^\prime(x) = \frac{2\alpha_0 m+1}{2\xi} - (1+\varepsilon)\sqrt{2}\gamma x^{\gamma-1},   
\end{align*}
let $f_\gamma^\prime(x_0) = 0$, then $x_0 = \left(\frac{2\alpha_0 m+1}{2\sqrt{2}(1+\varepsilon)\xi \gamma}\right)^\frac{1}{\gamma-1} $,

\begin{align*}
f_\gamma(x_0) =& (1+\varepsilon)\sqrt{2}(\gamma - 1)x_0^\gamma + \xi\\
=&\left(\sqrt{2}(1 + \varepsilon)\right)^{-\frac{1}{\gamma-1}}\left(\frac{2\gamma}{ 2\alpha_0 m + 1}\right)^{-\frac{\gamma}{\gamma-1}}(\gamma-1) \xi^{-1 - \frac{1}{\gamma-1}} + \xi\\
=&g(\gamma).
\end{align*}
Note when $\gamma \to \frac{1}{2}^-$, 
\begin{align*}
g(\gamma) \to \left(1 - \frac{(1+\varepsilon)^2}{2\alpha_0 m+1}\right)\xi > 0,       
\end{align*}
since $g(\gamma)$ is continuous in $(0, \frac{1}{2})$, so $\exists \gamma_0 > 0$, $\forall \gamma \in (\gamma_0, \frac{1}{2})$, $g(\gamma) > 0$. 
Fix any $\gamma_1 \in (\gamma_0, \frac{1}{2})$. Then we have 
\begin{equation}
\begin{aligned}
\left\lvert B^H_{\tau} - B^H_{\tau_\xi}\right\rvert \ge& \int_{\tau_\xi}^{\tau} (1-\alpha)m z_s^{-1}  \,ds + f(\tau-\tau_\xi) - f_{\gamma_1}(\tau-\tau_\xi) + f_{\gamma_1}(\tau-\tau_\xi) \\
\ge& \int_{\tau_\xi}^{\tau} (1-\alpha)m z_s^{-1}  \,ds + f(\tau-\tau_\xi) - f_{\gamma_1}(\tau-\tau_\xi), \quad a.s.     
\end{aligned}
\end{equation}

by Lemma~\ref{lem2}, for $\xi$ small enough, there exists a constant $C_H^\ast>0$
such that we have
\begin{equation}\label{eq:LIL-xi}
|B^H_\tau-B^H_{\tau_\xi}|
\le C_H^\ast\,(\tau-\tau_\xi)^H\sqrt{\log\log\tfrac{1}{\tau-\tau_\xi}}.
\end{equation}
It follows that
\begin{equation}
\begin{aligned}
C_H^\ast \left(\tau - \tau_\xi \right) ^{H}\sqrt{\log\log (\tau - \tau_\xi)^{-1}} &\ge C (1-\alpha)m \int_{\tau_\xi}^{\tau}(\tau - s)^{-\left(\frac{1}{2} - \gamma^\ast\right) }\,ds+f(\tau-\tau_\xi) - f_{\gamma_1}(\tau-\tau_\xi)\\
&\ge  C (1-\alpha)m \left(\frac{1}{2} + \gamma^\ast\right)^{-1}(\tau - \tau_\xi)^{\frac{1}{2}+ \gamma^\ast} + f(\tau-\tau_\xi) - f_{\gamma_1}(\tau-\tau_\xi),\quad a.s. 
\end{aligned}
\end{equation}
and notice for any $\gamma >0$, $f(x) - f_\gamma(x) = (1+\varepsilon)\sqrt{2}\left(x^\gamma - \sqrt{x\log \frac{1}{x}}\right) \ge 0$ for any $x$ sufficiently small, 
so let $\xi \to 0$, then $\tau_\xi \to \tau$, we only need to consider
\begin{equation}
\label{eqnfinal}
\left(\tau - \tau_\xi \right) ^{H-\frac{1}{2}-\gamma^\ast}\sqrt{\log\log (\tau - \tau_\xi)^{-1}} \ge  C_f(1-\alpha)m \left(\frac{1}{2} + \gamma^\ast\right)^{-1}, \quad a.s.   
\end{equation}
as $\xi \to 0$, then it's clear that as $\tau_\xi \to \tau$,
\begin{equation}
\left(\tau - \tau_\xi \right) ^{H-\frac{1}{2}-\gamma^\ast}\sqrt{\log\log (\tau - \tau_\xi)^{-1}} \to 0, \quad \text{a.s.}  
\end{equation}
since $H-\frac{1}{2} > \gamma^\ast$, then we have
\begin{equation}
0 \ge C_f(1-\alpha)m \left(\frac{1}{2} + \gamma^\ast\right)^{-1} > 0.    
\end{equation}
It follows that there is a contradiction from the limits of eq.\eqref{eqnfinal}.
\end{proof}

\section{Convergence of the implicit Euler scheme for the singular equation}
Having established the pathwise positivity of the mixed fractional CIR process, we finally address the numerical
approximation of the transformed process $z$. A natural candidate is the implicit Euler scheme associated with the
singular equation satisfied by $z$. The singularity of the drift at the origin makes the analysis of this scheme
nontrivial, and a stability argument is required to control the discrete trajectories uniformly in time. Building on the
framework developed by Marie~\cite{marie2015}, we derive pathwise estimates for the discrete scheme and prove its
convergence to the exact solution in the uniform norm on compact time intervals.
\subsection{Implicit Euler scheme and its convergence for the singular equation}
Recall the singular equation
\begin{equation}\label{eqnsingular-again}
\dd z_t = \Big[(m+\tfrac12)z_t^{-1} - \tfrac{k}{2} z_t\Big] \,\dd t + \dd M_t, 
\end{equation}
where $M=B+B^H$ is the mixed fractional Brownian motion and 
\[
m = \frac{2k\theta - \sigma^2}{\sigma^2}>0.
\]
Fix a time horizon $T>0$, and let $(t_k^n)_{k=0,\dots,n}$ be the uniform partition of $[0,T]$ given by
\[
t_k^n := \frac{kT}{n},\qquad \Delta t := t_{k+1}^n - t_k^n = \frac{T}{n}.
\]
The implicit Euler scheme associated to \eqref{eqnsingular-again} and 
to $(t_0^n, t_1^n, \dots, t_n^n)$ is 
\begin{equation}
\label{eqnimplicit}
z^n_{k+1} = z^n_{k} +  b(z^n_{k+1})\,\Delta t + M_{t^n_{k+1}}- M_{t^n_k},\qquad k=0,\dots,n-1,
\end{equation}
with
\[
b(x) = \Big(m+\frac{1}{2}\Big)x^{-1} - \frac{k}{2} x,\qquad z^n_0 := z_0 > 0.
\]

The proof of convergence is inspired by Marie~\cite{marie2015}.

\begin{remark}
Although \eqref{eqnsingular-again} does not satisfy Assumption~1.1.(4) of~\cite{marie2015}, 
the fact that it satisfies Assumptions~1.1.(1)--1.1.(3) makes the convergence 
argument in Marie~\cite{marie2015} applicable in our setting.    
\end{remark}

We define the continuous-time interpolation $(z_t^n)_{t\in[0,T]}$ by
\begin{equation}
z_t^n = \sum_{k=0}^{n-1} \left[ z_k^n + \frac{z_{k+1}^n - z_k^n}{t_{k+1}^n - t_k^n}(t - t_k^n) \right]
\mathbbm{1}_{(t_k^n, t_{k+1}^n]}(t),\qquad t\in[0,T].
\end{equation}
With the convention $z_0^n=z_0$, this defines a continuous process on $[0,T]$.

We also use the notation
\[
\|f\|_{\infty,T} := \sup_{t\in[0,T]}|f_t|,\qquad
\|f\|_{\alpha,T} := \sup_{0\le s<t\le T}\frac{|f_t-f_s|}{|t-s|^\alpha}
\]
for the uniform and H\"older norms on $[0,T]$.

\begin{thm}
\label{thm:euler-convergence}
Assume that $2 k\theta > \sigma^2$ and $k > 0$. Then the implicit Euler scheme \eqref{eqnimplicit} is well-defined and has a unique positive solution. Moreover, let $\alpha:=\frac12-\varepsilon$ with any $\varepsilon\in(0,1/2)$. There exists a constant $C > 0$ depending on $T$, $m$, $k$, and the sample path of $M$ such that
\begin{equation}
\|z^n - z\|_{\infty,T} \leq C\, n^{-\alpha}
= C\,n^{-\left(\frac{1}{2}-\varepsilon\right)}.
\end{equation}
\end{thm}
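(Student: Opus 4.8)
The plan is to follow the strategy of Marie~\cite{marie2015}, with one structural change forced by the singularity: the drift $b(x)=(m+\tfrac12)x^{-1}-\tfrac k2 x$ fails the (one-sided) Lipschitz hypothesis used there near $x=0$ (this is precisely Assumption~1.1.(4), as the Remark above points out), so in the error estimate I would instead exploit the global monotonicity of $b$ on $(0,\infty)$, namely $b'(x)=-(m+\tfrac12)x^{-2}-\tfrac k2<0$, together with the strict positivity of both the scheme and the limiting solution $z$. I would split the argument into three steps, plus a remark on the main obstacle.

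\emph{Step 1 (well-posedness, positivity, and a priori bounds for the scheme).} Fix $k$ with $z^n_k>0$ and put $c:=z^n_k+(M_{t^n_{k+1}}-M_{t^n_k})$. Then \eqref{eqnimplicit} for $y=z^n_{k+1}$ reads $\phi_{\Delta t}(y)=c$, where $\phi_{\Delta t}(y):=y-b(y)\Delta t=(1+\tfrac k2\Delta t)y-(m+\tfrac12)\Delta t\,y^{-1}$, and since $\phi_{\Delta t}'>0$ on $(0,\infty)$ with $\phi_{\Delta t}(0^+)=-\infty$, $\phi_{\Delta t}(+\infty)=+\infty$, the map $\phi_{\Delta t}\colon(0,\infty)\to\R$ is a smooth increasing bijection; hence $z^n_{k+1}$ exists, is unique and strictly positive, and inductively so is the whole scheme, with $z^n_{k+1}=2(m+\tfrac12)\Delta t\big/\big(\sqrt{c^2+4(1+\tfrac k2\Delta t)(m+\tfrac12)\Delta t}-c\big)$. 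Distinguishing the case $c\ge0$ (denominator of order $\Delta t^{1/2}$) from $c<0$ — in which case necessarily $0<z^n_k<|M_{t^n_{k+1}}-M_{t^n_k}|\le\|M\|_{\alpha,T}\Delta t^{\alpha}$, so $|c|\le\|M\|_{\alpha,T}\Delta t^{\alpha}$ and the denominator is of order $\Delta t^{\alpha}$ — and using $1-\alpha\ge\tfrac12$, one obtains the quantitative lower bound $z^n_k\ge c_\ast(T/n)^{1-\alpha}$, uniformly in $k$ and $n$, with $c_\ast>0$ depending only on $m,k,T,\|M\|_{\alpha,T}$. For an upper bound I would pass to the shifted sequence $y^n_k:=z^n_k-M_{t^n_k}$, which satisfies $y^n_{k+1}=y^n_k+b(z^n_{k+1})\Delta t$ with $z^n_{k+1}=y^n_{k+1}+M_{t^n_{k+1}}$ (this avoids summing the increments of $M$, whose absolute values sum to order $n^{1-\alpha}$); using $b(x)\le(m+\tfrac12)-\tfrac k2 x$ for $x\ge1$ and the resulting dissipativity, an elementary induction gives $y^n_k\le K$ for all $k,n$, with $K=\max\{\|M\|_{\infty,T}+1,\;z_0,\;\tfrac2k(m+\tfrac12+\tfrac k2\|M\|_{\infty,T})\}$, and since $z^n_k>0$ forces $y^n_k\ge-\|M\|_{\infty,T}$, we conclude $0<z^n_k\le\overline Z$ uniformly in $n$.

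\emph{Step 2 (error at the grid points).} By Theorem~\ref{thm1} the process $r$ is a.s. strictly positive, so $z=\tfrac2\sigma\sqrt r$ solves \eqref{eqnsingular-again} on all of $[0,T]$ in the integrated form $z_t=z_0+\int_0^t b(z_s)\,\dd s+M_t$; by continuity and positivity on the compact interval, $0<\underline z\le z_t\le\overline z<\infty$, and boundedness of $b(z_\cdot)$ makes $t\mapsto\int_0^tb(z_s)\,\dd s$ Lipschitz, so $z\in C^{\alpha}([0,T])$ with $\|z\|_{\alpha,T}<\infty$. Setting $e_k:=z^n_k-z_{t^n_k}$ and subtracting the integrated equation on $[t^n_k,t^n_{k+1}]$ from \eqref{eqnimplicit} (the increments of $M$ cancel) gives $e_{k+1}=e_k+[b(z^n_{k+1})-b(z_{t^n_{k+1}})]\Delta t-R_k$ with $R_k=\int_{t^n_k}^{t^n_{k+1}}[b(z_s)-b(z_{t^n_{k+1}})]\,\dd s$. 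Since $z^n_{k+1}-z_{t^n_{k+1}}=e_{k+1}$ and $b$ is strictly decreasing on $(0,\infty)$, one has $[b(z^n_{k+1})-b(z_{t^n_{k+1}})]e_{k+1}\le0$; multiplying the recursion by $e_{k+1}$ and using $e_ke_{k+1}\le|e_k||e_{k+1}|$ yields $|e_{k+1}|\le|e_k|+|R_k|$. As $b$ is Lipschitz on $[\underline z,\overline z]$ with constant $L=(m+\tfrac12)\underline z^{-2}+\tfrac k2$, we get $|R_k|\le L\|z\|_{\alpha,T}(\Delta t)^{1+\alpha}$, and summing from $e_0=0$ gives $\max_{0\le k\le n}|e_k|\le L\|z\|_{\alpha,T}T^{1+\alpha}n^{-\alpha}$.

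\emph{Step 3 (continuous interpolation).} For $t\in(t^n_k,t^n_{k+1}]$ one writes $|z^n_t-z_t|\le|z^n_t-z^n_{k+1}|+|e_{k+1}|+|z_{t^n_{k+1}}-z_t|$; the last two terms are $O(n^{-\alpha})$ by Step~2 and by $z\in C^\alpha$. For the first, the interpolation is affine on $[t^n_k,t^n_{k+1}]$, so $|z^n_t-z^n_{k+1}|\le|z^n_{k+1}-z^n_k|=|b(z^n_{k+1})\Delta t+(M_{t^n_{k+1}}-M_{t^n_k})|$, and the bounds of Step~1 give $(z^n_{k+1})^{-1}\Delta t\le c_\ast^{-1}T^{\alpha}n^{-\alpha}$, $\;z^n_{k+1}\Delta t\le\overline Z n^{-1}\le\overline Z n^{-\alpha}$, while $|M_{t^n_{k+1}}-M_{t^n_k}|\le\|M\|_{\alpha,T}(T/n)^{\alpha}$; collecting everything yields $\|z^n-z\|_{\infty,T}\le Cn^{-\alpha}$, with $C$ depending on $T,m,k$ and the sample path of $M$ through $\|M\|_{\infty,T},\|M\|_{\alpha,T},\underline z,\overline z,\|z\|_{\alpha,T}$, which is the claim.

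The step I expect to be the main obstacle is the quantitative lower bound in Step~1 and its use in Step~3: the estimate $z^n_k\gtrsim(T/n)^{1-\alpha}$ must hold with exactly this exponent — it is sharp in the regime $c<0$, where $z^n_k$ and $c$ are both of size $\Delta t^{\alpha}=\Delta t^{1/2-\varepsilon}$ — so that the term $(z^n_{k+1})^{-1}\Delta t$ in the interpolation error is controlled at precisely the advertised rate $n^{-(1/2-\varepsilon)}$; any worse exponent there destroys the rate. A closely related point that needs care is the appeal to~\cite{marie2015}: one must verify that the convergence argument there, although stated under a hypothesis our singular drift violates, uses only the monotone (one-sided Lipschitz) character of $b$ away from $0$ together with the strict positivity of the scheme and of the limit, so that it survives the singularity at the origin.
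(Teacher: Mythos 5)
Your proposal is correct and follows essentially the same route as the paper's proof (which, like yours, adapts Marie's argument): the same implicit solvability via monotonicity of $x\mapsto x-b(x)\Delta t$, the same use of the strict decrease of $b$ to obtain $|e_{k+1}|\le|e_k|+|R_k|$ without any one-sided Lipschitz hypothesis, the same Lipschitz-on-$[\underline z,\overline z]$ bound for the local error, and the same interpolation step. The one substantive difference is to your credit. In its Step 3 the paper bounds the interpolation term by $\|b\|_{\infty,[z_\ast,z^\ast]}\Delta t$ with $z_\ast,z^\ast$ the extrema of the \emph{exact} solution, even though the argument of $b$ there is the \emph{scheme} value $z^n_{k+1}$; what is actually needed is a lower bound on the scheme that is uniform in $n$, which the paper asserts at the end of its Step 2 but never proves (only the upper bound is established there, via a ``last crossing of $z_0$'' argument rather than your shifted-sequence dissipativity argument — both are valid). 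Your explicit solution formula and the resulting bound $z^n_k\ge c_\ast(T/n)^{1-\alpha}$, split according to the sign of $c=z^n_k+\Delta M_k$, closes exactly this gap, and your observation that $(z^n_{k+1})^{-1}\Delta t\lesssim n^{-\alpha}$ is precisely what preserves the advertised rate; the case analysis and exponents check out. So your write-up is, on this point, more complete than the paper's.
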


\begin{proof}
The proof follows the approach of Marie~\cite{marie2015}.

\medskip\noindent First, we show that the scheme is well-defined and remains strictly positive.

For each $k$, equation \eqref{eqnimplicit} is an implicit equation for $z_{k+1}^n$. Define the function
\begin{equation}
\varphi(x) = x - b(x)\,\Delta t - z_k^n - \Delta M_k,
\end{equation}
where $\Delta t = T/n$ and $\Delta M_k = M_{t_{k+1}^n} - M_{t_k^n}$. 

We compute the derivative:
\begin{equation}
\varphi'(x) = 1 - b'(x)\,\Delta t
= 1 + \left[\Big(m+\frac{1}{2}\Big)x^{-2} + \frac{k}{2}\right] \Delta t > 0 \quad \text{for all } x > 0,
\end{equation}
so $\varphi$ is strictly increasing on $(0,\infty)$. Moreover, $\varphi(x) \to -\infty$ as $x \to 0^+$ and
$\varphi(x) \to +\infty$ as $x \to +\infty$. Therefore, by the intermediate value theorem, there exists a unique positive
solution $z_{k+1}^n > 0$. Inductively, the scheme is well-defined and remains strictly positive.

\medskip\noindent Next, we show that the scheme is uniformly bounded on $[0,T]$.

Fix $k \in \{1,\ldots,n\}$ and define
\begin{equation}
n(z_0,k) := \max\{i \in \{0,\ldots,k\}: z_i^n \leq z_0\}.
\end{equation}
If $n(z_0,k) = k$, then $0 < z_k^n \leq z_0$. If $n(z_0,k) < k$, then
\begin{equation}
\begin{aligned}
z_k^n - z_{n(z_0,k)}^n 
&= \sum_{i=n(z_0,k)}^{k-1} (z_{i+1}^n - z_i^n) \\
&= \sum_{i=n(z_0,k)}^{k-1} \Big( b(z_{i+1}^n)\Delta t + \Delta M_i\Big) \\
&= \Delta M_{n(z_0,k),k} + \sum_{i=n(z_0,k)}^{k-1} b(z_{i+1}^n)\Delta t,
\end{aligned}
\end{equation}
where $\Delta M_{n(z_0,k),k} = M_{t_k^n} - M_{t_{n(z_0,k)}^n}$.

Since $b$ is strictly decreasing (because $b'(x)<0$ for all $x>0$) and $z_{i+1}^n > z_0$ for
$i \geq n(z_0,k)$, we have
\begin{equation}
b(z_{i+1}^n) \leq b(z_0) \quad \text{for all } i \geq n(z_0,k).
\end{equation}
Therefore,
\begin{equation}
\sum_{i=n(z_0,k)}^{k-1} b(z_{i+1}^n) \Delta t \leq b(z_0)(t_k^n - t_{n(z_0,k)}^n) \leq |b(z_0)|T.
\end{equation}
Moreover,
\begin{equation}
|\Delta M_{n(z_0,k),k}| \leq 2\|M\|_{\infty,T}.
\end{equation}
Hence,
\begin{equation}
0 < z_k^n \leq z_0 + |b(z_0)|T + 2\|M\|_{\infty,T},
\end{equation}
which shows that $(z_k^n)_{k=0,\dots,n}$ is uniformly bounded in $k$ and in $n$ (for fixed $T$).

\medskip\noindent Finally, we derive the convergence estimate.

Let $\xi_k^n := z_{t_k^n}$ be the exact solution evaluated at the grid points. The exact solution satisfies
\begin{equation}
\xi_{k+1}^n = \xi_k^n + \int_{t_k^n}^{t_{k+1}^n} b(z_s)\,\dd s + \big(M_{t_{k+1}^n} - M_{t_k^n}\big).
\end{equation}
Subtracting this from the scheme \eqref{eqnimplicit}, we obtain
\begin{equation}
z_{k+1}^n - \xi_{k+1}^n = z_k^n - \xi_k^n + \big[b(z_{k+1}^n) - b(\xi_{k+1}^n)\big]\Delta t + \varepsilon_k^n,
\end{equation}
where
\begin{equation}
\varepsilon_k^n := b(\xi_{k+1}^n)\Delta t - \int_{t_k^n}^{t_{k+1}^n} b(z_s)\,\dd s.
\end{equation}

Since $b$ is strictly decreasing, we distinguish two cases.

\medskip\noindent
\emph{Case 1:} If $z_{k+1}^n > \xi_{k+1}^n$, then $b(z_{k+1}^n) - b(\xi_{k+1}^n) \leq 0$, and thus
\begin{equation}
|z_{k+1}^n - \xi_{k+1}^n| = z_{k+1}^n - \xi_{k+1}^n 
\leq |z_k^n - \xi_k^n| + |\varepsilon_k^n|.
\end{equation}

\medskip\noindent
\emph{Case 2:} If $z_{k+1}^n \leq \xi_{k+1}^n$, then $b(\xi_{k+1}^n) - b(z_{k+1}^n) \leq 0$, and
\begin{equation}
|z_{k+1}^n - \xi_{k+1}^n| = \xi_{k+1}^n - z_{k+1}^n 
\leq |z_k^n - \xi_k^n| + |\varepsilon_k^n|.
\end{equation}

In both cases we have
\begin{equation}
|z_{k+1}^n - \xi_{k+1}^n| \leq |z_k^n - \xi_k^n| + |\varepsilon_k^n|.
\end{equation}
By induction it follows that
\begin{equation}
|z_k^n - \xi_k^n| \leq \sum_{i=0}^{k-1} |\varepsilon_i^n|,\qquad k=1,\dots,n.
\end{equation}

We now estimate the error term $\varepsilon_i^n$:
\begin{equation}
|\varepsilon_i^n| 
= \left| \int_{t_i^n}^{t_{i+1}^n} \big[b(\xi_{i+1}^n) - b(z_s)\big]\,\dd s \right|.
\end{equation}
Let $z_\ast := \inf_{t\in[0,T]} z_t$ and $z^\ast := \sup_{t\in[0,T]} z_t$. Since $z$ is continuous and strictly positive on $[0,T]$, we have $0<z_\ast\le z^\ast<\infty$. Since $b$ is continuously differentiable on
$[z_\ast,z^\ast]$, it is Lipschitz continuous on this interval with constant $L_b>0$. Therefore,
\begin{equation}
|\varepsilon_i^n| \leq L_b \int_{t_i^n}^{t_{i+1}^n} |\xi_{i+1}^n - z_s|\, \dd s.
\end{equation}
Since $z$ is $\alpha$-H\"older continuous on $[0,T]$ with $\alpha=\frac{1}{2}-\varepsilon$ (this follows from the fact
that $M$ has H\"older regularity $\alpha$ and the rough differential equation theory), we have
\[
|\xi_{i+1}^n - z_s| = |z_{t_{i+1}^n} - z_s| \leq \|z\|_{\alpha,T} (t_{i+1}^n - s)^\alpha.
\]
Hence,
\begin{equation}
|\varepsilon_i^n| \leq L_b \|z\|_{\alpha,T} \int_{t_i^n}^{t_{i+1}^n} (t_{i+1}^n - s)^\alpha \dd s
= L_b \|z\|_{\alpha,T} \frac{(\Delta t)^{\alpha+1}}{\alpha+1}.
\end{equation}

Thus,
\begin{equation}
\sum_{i=0}^{k-1} |\varepsilon_i^n| \leq
k \cdot L_b \|z\|_{\alpha,T} \frac{(\Delta t)^{\alpha+1}}{\alpha+1}
\leq L_b \|z\|_{\alpha,T} \frac{T^{\alpha+1}}{(\alpha+1)n^\alpha},
\end{equation}
and therefore, at grid points,
\begin{equation}
\max_{0 \le k \le n} |z_k^n - \xi_k^n| \leq
L_b \|z\|_{\alpha,T} \frac{T^{\alpha+1}}{(\alpha+1)n^\alpha}.
\end{equation}

Next, consider the continuous-time interpolation. For $t \in (t_k^n, t_{k+1}^n]$ we have
\begin{equation}
|z_t^n - z_t| \leq |z_t^n - z_k^n| + |z_k^n - \xi_k^n| + |\xi_k^n - z_t|.
\end{equation}

For the first term, by linear interpolation we have
\begin{equation}
\begin{aligned}
|z_t^n - z_k^n|
&\leq |z_{k+1}^n - z_k^n| \\
&\leq |z_{k+1}^n-\xi_{k+1}^n| + |\xi_{k+1}^n-\xi_k^n| + |\xi_k^n-z_k^n| \\
&\le 2\max_{0\le j\le n}|z_j^n-\xi_j^n| + \|z\|_{\alpha,T}\Delta t^\alpha.
\end{aligned}
\end{equation}
For the third term, using the H\"older regularity of $z$,
\begin{equation}
|\xi_k^n - z_t| = |z_{t_k^n} - z_t| \leq \|z\|_{\alpha,T} \Delta t^\alpha.
\end{equation}

Combining all estimates and using the bound for $|z_k^n - \xi_k^n|$, we obtain
\begin{equation}
\|z^n - z\|_{\infty,T} \leq C\, n^{-\alpha},
\end{equation}
for some (random) constant $C>0$ depending on $T$, $b$, $\|z\|_{\alpha,T}$, and $\|M\|_{\alpha,T}$. This completes the
proof.
\end{proof}

\subsection{Lower bound and optimal convergence rate for the singular equation driven by
mixed fBm}
In the previous subsection we established an upper convergence bound for the implicit Euler scheme applied to the singular equation. We now derive a corresponding lower bound for the approximation error from the skeleton of the driving process.

The lower bound identifies a universal polynomial barrier for uniform approximation. Together with the upper estimate, this determines the order of convergence of the scheme and yields optimality in the sense explained below.

Fix $T>0$. Let $M=B+B^{H}$ where $B$ is a standard Brownian motion, $B^{H}$ is an independent fractional Brownian motion with Hurst index $H>1/2$, and consider the singular equation
\begin{equation}\label{eq:singular}
  z_t = z_0 + \int_0^t b(z_s)\,ds + M_t,\qquad 
  b(x)=\Big(m+\tfrac12\Big)\frac{1}{x}-\frac{k}{2}x,\qquad z_0>0.
\end{equation}
(Here $m>0$ corresponds to the classical Feller condition $2k\theta>\sigma^2$ in the original CIR variables.)

Assume that the solution $z$ of \eqref{eq:singular} exists, is continuous, and satisfies $z_t>0$ for all $t\ge 0$ almost surely.
Then for each fixed $T>0$, the quantity
\[
  z_*(\omega) := \inf_{t\in[0,T]} z_t(\omega)
\]
is well-defined and strictly positive for almost every $\omega$ (pathwise statement): indeed, a continuous strictly positive function on the compact interval $[0,T]$ attains a strictly positive minimum.

\medskip

Let $t_k:=kT/n$, $\Delta t:=T/n$, and denote by $\mathcal{G}_n:=\sigma(M_{t_0},\dots,M_{t_n})$ the information contained in the sampled skeleton of $M$. The following two lemmas yield a simple ``information lower bound'' for approximating $z$ from the sampled skeleton $\mathcal{G}_n$

\begin{lem}\label{lem:link}
Let $z$ solve \eqref{eq:singular} and fix $T>0$. Define $z_*=\inf_{t\in[0,T]}z_t$ (random, but $z_*>0$ a.s.).
Let $\widehat z$ be any continuous process on $[0,T]$ (not necessarily solving any equation) such that
\[
  \|\widehat z-z\|_{\infty,T} \le \frac{z_*}{2},
  \qquad \text{where } \|f\|_{\infty,T}:=\sup_{t\in[0,T]}|f_t|.
\]
Define the associated ``reconstructed noise''
\begin{equation}\label{eq:Mhat}
  \widehat M_t := \widehat z_t - z_0 - \int_0^t b(\widehat z_s)\,ds,\qquad t\in[0,T].
\end{equation}
Then
\begin{equation}\label{eq:link-ineq}
  \|z-\widehat z\|_{\infty,T}\ \ge\ \frac{1}{1+L_*T}\ \|M-\widehat M\|_{\infty,T},
  \qquad 
  L_* := \frac{4(m+\tfrac12)}{z_*^2}+\frac{k}{2}.
\end{equation}
Moreover, if $\widehat z$ is $\mathcal{G}_n$-measurable, then $\widehat M$ is also $\mathcal{G}_n$-measurable.
\end{lem}

\begin{proof}
On the event $\{\| \widehat z-z\|_{\infty,T} \le z_*/2\}$ we have $\widehat z_t \ge z_t - z_*/2 \ge z_*/2$ for all $t\in[0,T]$.
Since $b'(x)=-(m+\tfrac12)x^{-2}-\frac{k}{2}$, we have for all $x\ge z_*/2$,
\[
  |b'(x)| \le \frac{m+\tfrac12}{(z_*/2)^2}+\frac{k}{2}=\frac{4(m+\tfrac12)}{z_*^2}+\frac{k}{2}=:L_*,
\]
hence $b$ is Lipschitz on $[z_*/2,\infty)$ with Lipschitz constant $L_*$.
Subtracting the identity \eqref{eq:Mhat} from \eqref{eq:singular} yields, for all $t\in[0,T]$,
\[
  (M_t-\widehat M_t) = (z_t-\widehat z_t) - \int_0^t\big(b(z_s)-b(\widehat z_s)\big)\,ds.
\]
Taking absolute values, using the Lipschitz bound for $b$ and the definition of $\|\cdot\|_{\infty,T}$ gives
\[
  |M_t-\widehat M_t|
  \le \|z-\widehat z\|_{\infty,T} + \int_0^t L_* \|z-\widehat z\|_{\infty,T}\,ds
  \le (1+L_*T)\|z-\widehat z\|_{\infty,T}.
\]
Taking the supremum over $t\in[0,T]$ yields
$\|M-\widehat M\|_{\infty,T}\le (1+L_*T)\|z-\widehat z\|_{\infty,T}$, which is equivalent to \eqref{eq:link-ineq}.

Finally, if $\widehat z$ is $\mathcal{G}_n$-measurable, then \eqref{eq:Mhat} shows $\widehat M$ is obtained from $\widehat z$ by deterministic operations (time integration of a measurable integrand), hence $\widehat M$ is $\mathcal{G}_n$-measurable.
\end{proof}

\begin{lem}\label{lem:noise-lb}
Let $t_k=kT/n$ and $\mathcal{G}_n=\sigma(M_{t_0},\dots,M_{t_n})$.
For any $\mathcal{G}_n$-measurable (real-valued) random variable $\widehat M_s$ at a fixed time $s\in(0,T)$, there exist absolute constants $c_0,p_0\in(0,1)$ (independent of $n$ and of the choice of $\widehat M_s$) such that
\[
  \mathbb{P}\Big(|M_s-\widehat M_s|\ \ge\ c_0\sqrt{\Delta t}\Big)\ \ge\ p_0.
\]
In particular, for any continuous $\mathcal{G}_n$-measurable process $\widehat M$ on $[0,T]$,
\begin{equation}\label{eq:noise-sup-lb}
  \mathbb{P}\Big(\|M-\widehat M\|_{\infty,T}\ \ge\ c_0\sqrt{\Delta t}\Big)\ \ge\ p_0.
\end{equation}
\end{lem}

\begin{proof}
Fix the first interval $[t_0,t_1]=[0,\Delta t]$ and take $s:=\Delta t/2$.
Decompose the Brownian motion $B$ on $[0,\Delta t]$ as linear interpolation plus an independent Brownian bridge term:
\[
  B_s = \frac{B_0+B_{\Delta t}}{2} + \beta,
  \qquad 
  \beta := B_{\Delta t/2}-\frac{B_0+B_{\Delta t}}{2}.
\]
It is standard that $\beta\sim\mathcal{N}(0,\Delta t/4)$ and that $\beta$ is independent of the entire Brownian skeleton $\sigma(B_{t_0},\dots,B_{t_n})$
(and in particular independent of $\sigma(B_0,B_{\Delta t})$); moreover, since $B^H$ is independent of $B$, $\beta$ is independent of the sigma-field generated by $\{B^H_t: t\in[0,T]\}$.
Therefore, $\beta$ is independent of $\mathcal{G}_n=\sigma(M_{t_0},\dots,M_{t_n})$.

Write
\[
  M_s = B_s + B^H_s
      = \underbrace{\frac{B_0+B_{\Delta t}}{2} + B^H_s}_{=:U} + \beta.
\]
The random variable $U$ may depend on $(B_0,B_{\Delta t})$ and on $B^H_s$, but $\beta$ is independent of $(U,\mathcal{G}_n)$.
Let $\widehat M_s$ be any $\mathcal{G}_n$-measurable random variable. Conditioning on $(U,\mathcal{G}_n)$, the quantity $(U-\widehat M_s)$ becomes a constant and
\[
  M_s-\widehat M_s = \beta + (U-\widehat M_s).
\]
For any $a>0$ and any constant $c\in\mathbb{R}$, the probability $\mathbb{P}(|\beta+c|\ge a)$ is minimized at $c=0$ (shifting a centered normal away from $0$ can only increase the mass outside $[-a,a]$).
Hence, almost surely,
\[
  \mathbb{P}\big(|M_s-\widehat M_s|\ge a \,\big|\, U,\mathcal{G}_n\big)
  = \mathbb{P}\big(|\beta+(U-\widehat M_s)|\ge a \,\big|\, U,\mathcal{G}_n\big)
  \ge \mathbb{P}(|\beta|\ge a).
\]
Choose $a=c_0\sqrt{\Delta t}$ with, say, $c_0:=1/8$. Since $\beta/\sqrt{\Delta t}\sim\mathcal{N}(0,1/4)$, the quantity
$p_0:=\mathbb{P}(|\beta|\ge c_0\sqrt{\Delta t})$ is a fixed positive constant independent of $n$.
Taking expectations over $(U,\mathcal{G}_n)$ yields
\[
  \mathbb{P}\Big(|M_s-\widehat M_s|\ge c_0\sqrt{\Delta t}\Big)\ge p_0,
\]
proving the first claim. The sup-norm bound \eqref{eq:noise-sup-lb} follows since $\|M-\widehat M\|_{\infty,T}\ge |M_s-\widehat M_s|$.
\end{proof}

With these two lemmas and the up bound of the error of implicit Euler scheme we can illustrate the optimal convergence rate in probability: 

\begin{prop}\label{prop:halfbarrier}
Let $z$ solve \eqref{eq:singular} and fix $T>0$.
Let $(\widehat z^{\,n})_{n\ge 1}$ be any sequence of continuous $\mathcal{G}_n$-measurable processes on $[0,T]$.
Then for every $\delta>0$,
\[
  n^{\frac12+\delta}\,\|z-\widehat z^{\,n}\|_{\infty,T}\ \not\longrightarrow\ 0
  \qquad \text{in probability as } n\to\infty.
\]
Equivalently, no method based only on the skeleton $\{M_{t_k}\}_{k=0}^n$ can achieve a polynomial rate strictly better than $n^{-1/2}$ in the uniform norm (in probability).
\end{prop}

\begin{proof}
Assume by contradiction that there exist $\delta>0$ and a sequence $(\widehat z^{\,n})_{n\ge 1}$ such that
\begin{equation}\label{eq:contr-halfbarrier}
  n^{\frac12+\delta}\,\|z-\widehat z^{\,n}\|_{\infty,T}\longrightarrow 0
  \qquad\text{in probability as } n\to\infty.
\end{equation}
In particular, $\|z-\widehat z^{\,n}\|_{\infty,T}\to 0$ in probability.

We first justify the random threshold $z_*/2$. For $N\ge 1$ set $A_N:=\{z_*\ge 1/N\}$. Since $z_*>0$ almost surely,
$\mathbb{P}(A_N)\uparrow 1$ as $N\to\infty$. For each fixed $N$,
\[
  \mathbb{P}\Big(\|z-\widehat z^{\,n}\|_{\infty,T}>\tfrac{z_*}{2}\Big)
  \le \mathbb{P}(A_N^c)+\mathbb{P}\Big(\|z-\widehat z^{\,n}\|_{\infty,T}>\tfrac{1}{2N}\Big)
  \xrightarrow[n\to\infty]{}\ \mathbb{P}(A_N^c),
\]
and letting $N\to\infty$ yields
\begin{equation}\label{eq:threshold-prob}
  \mathbb{P}\Big(\|z-\widehat z^{\,n}\|_{\infty,T}\le \tfrac{z_*}{2}\Big)\longrightarrow 1.
\end{equation}

Define $\widehat M^{\,n}$ from $\widehat z^{\,n}$ by \eqref{eq:Mhat} (for all $\omega$). On the event
$\{\|z-\widehat z^{\,n}\|_{\infty,T}\le z_*/2\}$ we can apply Lemma~\ref{lem:link}.
Lemma \ref{lem:link} yields
\[
  \|M-\widehat M^{\,n}\|_{\infty,T}\ \le\ (1+L_*T)\,\|z-\widehat z^{\,n}\|_{\infty,T},
\]
where $L_*=\frac{4(m+\tfrac12)}{z_*^2}+\frac{k}{2}$ depends only on the (random) lower bound $z_*$.
Combining this with \eqref{eq:contr-halfbarrier}, the fact that $L_*<\infty$ almost surely, and
\eqref{eq:threshold-prob}, we obtain
\[
  n^{\frac12+\delta}\,\|M-\widehat M^{\,n}\|_{\infty,T}\longrightarrow 0
  \qquad\text{in probability as } n\to\infty.
\]

This contradicts Lemma \ref{lem:noise-lb}, which gives constants $c_0,p_0>0$ such that for every $n$,
\[
  \mathbb{P}\Big(\|M-\widehat M^{\,n}\|_{\infty,T} \ge c_0\sqrt{\Delta t}\Big)\ \ge\ p_0,
  \qquad \sqrt{\Delta t}=\sqrt{T/n}.
\]
Indeed, if $n^{\frac12+\delta}\|M-\widehat M^{\,n}\|_{\infty,T}\to 0$ in probability, then in particular
$\mathbb{P}\big(\|M-\widehat M^{\,n}\|_{\infty,T}\ge c_0\sqrt{T/n}\big)\to 0$, a contradiction.
\end{proof}

\begin{remark}
Proposition \ref{prop:halfbarrier} shows that the exponent $1/2$ is a universal \emph{polynomial barrier} for uniform approximation of $z$ from the skeleton of $M$ (in probability).
If one has an upper bound of the form $\|z^n-z\|_{\infty,T}\le C(\omega)\,n^{-(1/2-\varepsilon)}$ (as in the implicit Euler analysis),
then the scheme is \emph{near-optimal} in the sense that it attains any polynomial rate strictly smaller than the barrier $1/2$.    
\end{remark}

\begin{remark}[why the exponent $1/2$ appears in the pathwise rate]
Writing $\Delta t:=T/n$, the upper bound in Theorem~\ref{thm:euler-convergence} reads (pathwise)
$\|z^n-z\|_{\infty,T}\le C(\omega)\,(\Delta t)^{\alpha}$ for every $\alpha<1/2$, i.e.\ $\|z^n-z\|_{\infty,T}=O_\omega(n^{-\alpha})$.
One cannot take $\alpha=1/2$ because the Brownian part of $M$ is H\"older continuous only for exponents strictly smaller than $1/2$.
On the other hand, Proposition~\ref{prop:halfbarrier} shows that no algorithm based solely on the sampled skeleton can beat the scale $\sqrt{\Delta t}$ (equivalently $n^{-1/2}$) in the uniform norm, due to the Brownian bridge fluctuation inside each grid interval; the fractional component $B^H$ is smoother ($H>1/2$) and does not change this barrier.
In log--log plots, a rate $n^{-1/2}$ corresponds to slope $-1/2$, which explains the notation ``$-1/2$'' for the optimal exponent.
\end{remark}

\begin{remark}[$L^p(\Omega)$-strong error: what is missing]
Our error bound in Theorem~\ref{thm:euler-convergence} is \emph{pathwise} and involves a random constant $C(\omega)$ depending on (at least) the H\"older norms of $M$ and on inverse powers of the random minimum $z_*=\inf_{t\in[0,T]}z_t$ (see also $L_*=\frac{4(m+\tfrac12)}{z_*^2}+\frac{k}{2}$ in Lemma~\ref{lem:link}).
To upgrade to an $L^p(\Omega)$-strong estimate one would need suitable moment bounds such as $\mathbb{E}[C(\omega)^p]<\infty$, which is delicate near the Feller boundary.
This prevents a direct extension of the techniques in \cite{HONG20202675,10.1098/rspa.2011.0505} and requires additional structural arguments (possibly of Malliavin/type or precise inverse-moment estimates), which we leave for future work.
\end{remark}

\subsection{The Simulations of The Mixed CIR Model}

In the following, we illustrate the behavior of the mixed fractional CIR model
and the implicit Euler scheme. We simulate the short rate process $r_t$ on
the interval $[0,10]$ with different numbers of sample paths.

\begin{center}
\resizebox{160mm}{80mm}{\includegraphics{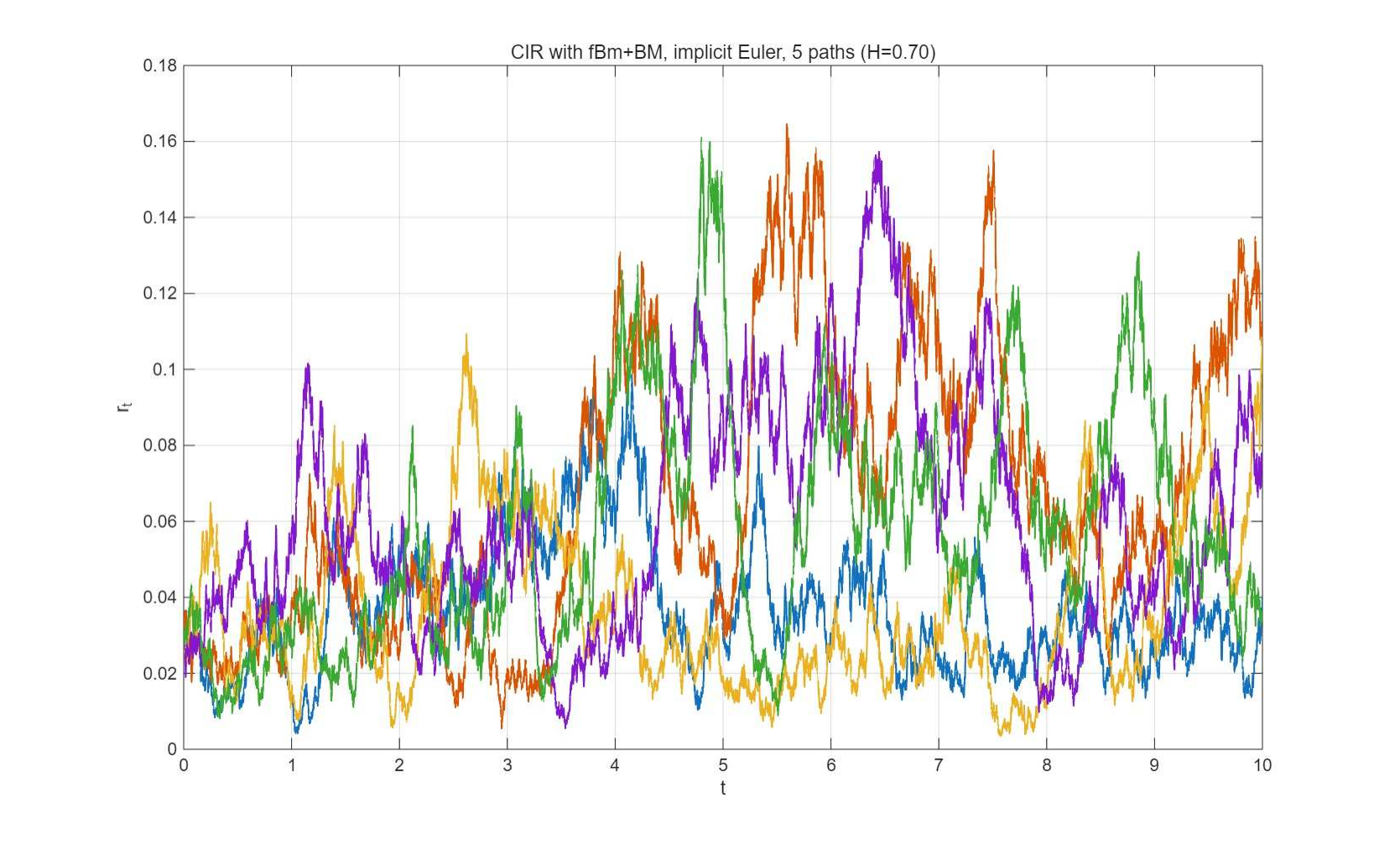}}\\
\resizebox{160mm}{80mm}{\includegraphics{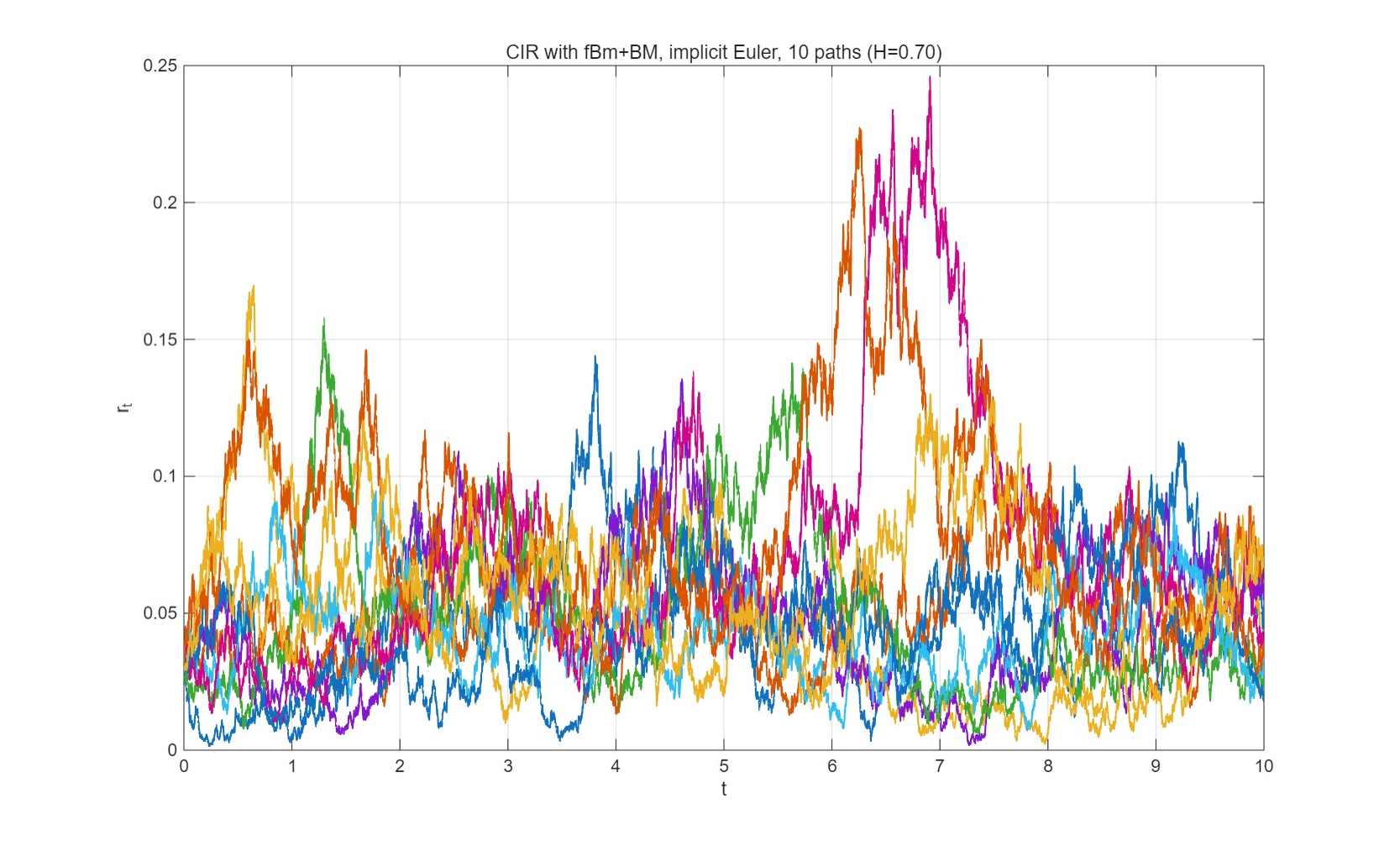}}\\
\resizebox{160mm}{80mm}{\includegraphics{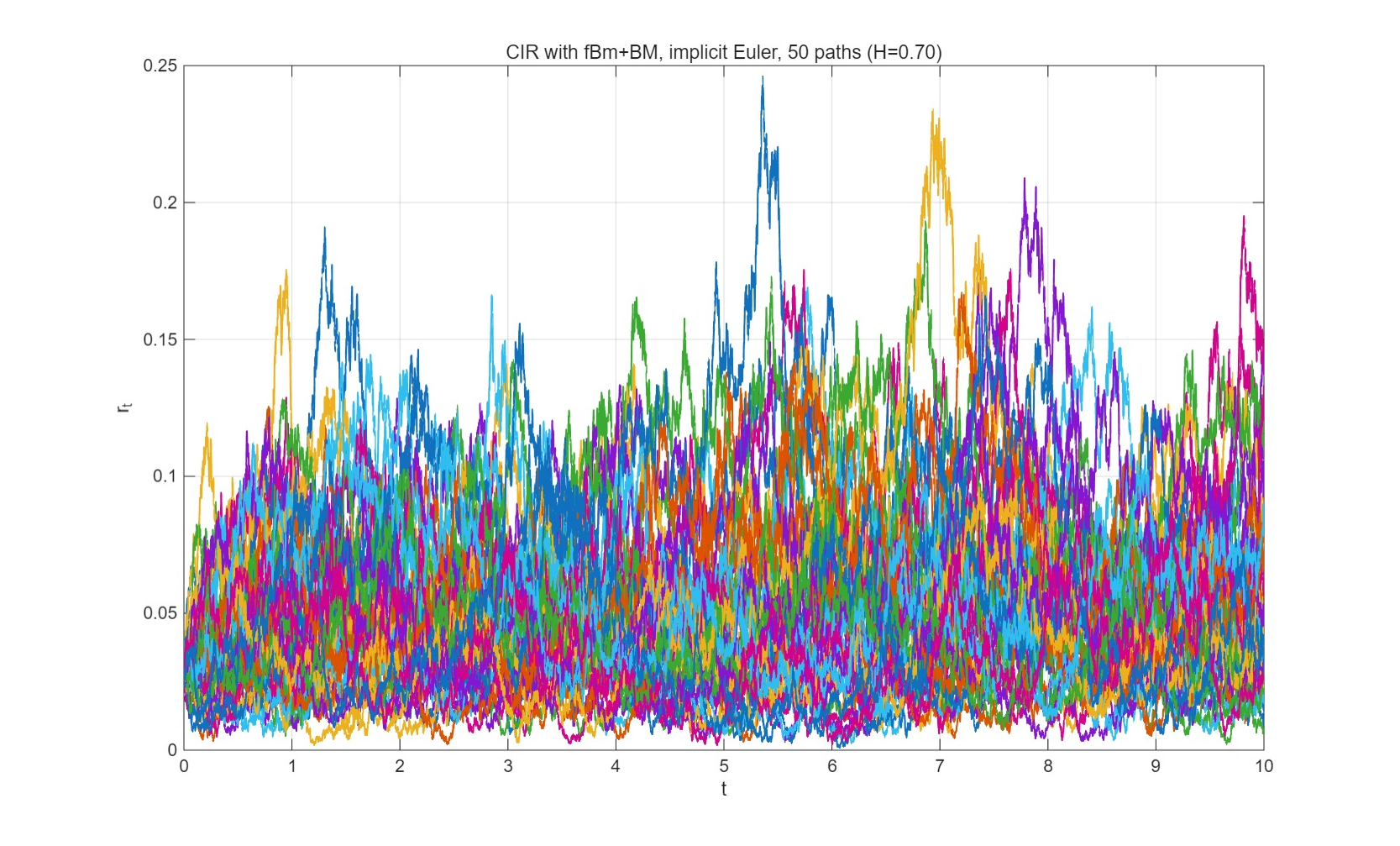}}
Fig.1.Implicit Euler simulations of the mixed fractional CIR short rate
  on the interval $[0,10]$. The figure shows trajectories of $r_t$ obtained
  from the implicit Euler scheme for $5$, $10$ and $50$ independent sample paths.
\end{center}

\end{document}